\documentclass[reqno]{amsart}
\usepackage{amssymb,amsmath,amsfonts,amscd,amsthm,pb-diagram}
\usepackage{amsbsy,bm}
\usepackage[usenames,dvipsnames]{color}
\usepackage[normalem]{ulem}

\newtheorem{theorem}{Theorem}[section]

\newtheorem{lemma}{Lemma}[section]
\newtheorem{proposition}{Proposition}[section]

\newtheorem{remark}{Remark}[section]

\DeclareMathOperator{\Ric}{Ric}
\DeclareMathOperator{\tr}{tr}
\newcommand{\dd}{\,\mathrm d}

\begin{document}
\title[Sharp pinching theorems for Lagrangian submanifolds]{Sharp scalar and Ricci curvature pinching
theorems\\ for Lagrangian submanifolds
in the\\ homogeneous nearly K\"ahler Six-sphere}

\author{Zejun Hu}
\address{Zejun Hu: 
School of Mathematics and Statistics, Zhengzhou University, Zhengzhou, 450001, P.R. China}
\email{huzj@zzu.edu.cn}

\author{Linlin Sun}
\address{Linlin Sun:
School of Mathematics and Computational Science and Hunan Research Center of the Basic
Discipline Fundamental Algorithmic Theory and Novel Computational Methods, Xiangtan University,
Xiangtan, 411105, P.R. China}
\email{sunll@xtu.edu.cn}

\author{Jiabin Yin}
\address{Jiabin Yin:
School of Mathematics and Statistics,
Xinyang Normal University, Xinyang 464000,
P.R. China}
\email{jiabinyin@126.com}

\thanks{2020 {\it Mathematics Subject Classification.}
53C24, 53C40, 53D12.}


\date{}

\keywords{Lagrangian submanifold; nearly K\"ahler $6$-sphere; Ricci curvature;
second fundamental form; scalar curvature; pinching theorem.}

\begin{abstract}
For connected closed Lagrangian submanifolds of the homogeneous nearly
K\"ahler sphere $\mathbb S^6(1)$, we first establish a new integral inequality.
Then, as applications we prove sharp pinching theorems for the scalar curvature
$\tau$ and Ricci curvature $\Ric$, respectively: $\tau \ge \tfrac{23}{8}$ implies
either $\tau=6$ with it totally geodesic, or $\tau=\tfrac{23}{8}$ with it congruent
to the embedded Berger sphere of Dillen--Verstraelen--Vrancken; whereas
$\Ric\geq\frac18$ implies that the submanifold is either totally
geodesic, or has constant sectional curvature $K=\tfrac{1}{16}$, or is congruent
to the embedded Berger sphere of Dillen-Verstraelen-Vrancken.
\end{abstract}

\maketitle
\numberwithin{equation}{section}

\section{Introduction}\label{sect:1}

Curvature pinching for minimal submanifolds in spheres is closely
related to the differential identity for the second fundamental form
established by Simons \cite{Simons}. In the nearly K\"ahler six-sphere,
the Lagrangian submanifold condition imposes additional restrictions on its second
fundamental form so that its components form a symmetric trace-free cubic, and
its covariant derivative contains terms arising from the nonparallel
almost complex structure. Both features enter the pinching problem.

Seeing that Foscolo and Haskins \cite{F-H} have proved the existence
of exotic (cohomogeneity one) nearly K\"ahler structures on $\mathbb{S}^6$,
we shall consider the unit six-sphere $\mathbb S^6(1)$ equipped with the
standard homogeneous nearly K\"ahler structure $J$ defined by Cayley multiplication.
A three-dimensional submanifold $M^3$ in $\mathbb S^6(1)$ is Lagrangian
if $J(TM)=T^\perp M$. Ejiri \cite{Ejiri} proved that such submanifolds are orientable
and minimal; minimality in strict nearly K\"ahler six-manifolds was also studied by
Sch\"afer and Smoczyk \cite{SchaferSmoczyk}. In particular, for a
three-dimensional Lagrangian submanifold of $\mathbb S^6(1)$, its scalar curvature
$\tau$ satisfies $\tau=6-|h|^2$, where $|h|^2$ denotes the squared
norm of its second fundamental form $h$.

The first rigidity results concern with the sectional curvature.
Ejiri \cite{Ejiri} showed that a Lagrangian threefold of constant sectional
curvature has curvature $1$ or $\tfrac1{16}$. Each such submanifold is either
totally geodesic or congruent to an equivariant immersion of $\mathbb{S}^3(\tfrac1{16})$
in $\mathbb{S}^6(1)$ (the immersion can be realized by using harmonic polynomials
of degree $6$ and an explicit expression is given in \cite{DVV1,DVV}).
Mashimo \cite{Mashimo} classified the compact homogeneous examples arising as orbits of
closed subgroups of $G_2$. Dillen, Opozda, Verstraelen and Vrancken
\cite{DOVV} proved that a closed Lagrangian submanifold is totally geodesic
if its sectional curvature $K>\tfrac1{16}$. Dillen, Verstraelen and Vrancken \cite{DVV} subsequently
classified all complete Lagrangian threefolds with $K\geq\tfrac1{16}$: besides
the totally geodesic and constant curvature $\tfrac1{16}$ sphere, there is an
embedded Berger sphere whose sectional curvatures range from $\tfrac1{16}$ to $\tfrac{21}{16}$,
which is usually called the Berger sphere of Dillen-Verstraelen-Vrancken.
Thus the condition $\tfrac1{16}\leq K<\tfrac{21}{16}$ excludes
precisely the Berger sphere.

For  rigidity results concerning the Ricci curvature,
Li \cite{Li} proved that a closed Lagrangian threefold of $\mathbb{S}^6(1)$
is total geodesic provided its Ricci curvature $\Ric$ satisfies
$\Ric\ge\tfrac{53}{64}$, and Anti\'c, Djori\'c and Vrancken \cite{ADV}
improved the lower bound to $\Ric\ge\tfrac34$. (Notes: Here and below, Ricci bounds
denote inequalities of quadratic forms with respect to the induced metric.)
The above two results motivated
Hu, Yao and Yin \cite{HYY2020} to further consider the question {\it what is the
best possible Ricci curvature condition so that compact Lagrangian
submanifolds of the nearly K\"ahler $\mathbb{S}^6(1)$ next to the totally
geodesic one can be characterized.} Successfully, Hu, Yao and Yin \cite{HYY2020}
obtained a Ricci pinching characterization of the totally geodesic sphere and
the Berger sphere of Dillen-Verstraelen-Vrancken.

For pinching theorems concerning with the scalar curvature,
by the relation $\tau=6-|h|^2$ it is equivalent to bound the length of
the second fundamental form without imposing a directional curvature
condition. In this respect, Hou \cite{Hou} proved that a
closed Lagrangian threefold with $|h|^2<\tfrac52$ is totally geodesic.
Then, Hu-Yin-Yin \cite{HYY2019}    propose the following problem:

\vskip2mm
{\bf Problem:} {\it Try to characterize the compact Lagrangian submanifold of
the  homogeneous nearly Kähler $\mathbb S^6(1)$ whose second fundamental
form $h$ has an optimal value of length next to that of the totally geodesic one.}

\vskip2mm
In \cite{HYY2019},  Hu, Yin and Yin obtained the sharp integral inequality
$$
\int_M |h|^2\left(|h|^2-\tfrac54-\tfrac32\Theta^2\right)\dd M\geq0,
\qquad  \Theta(p)=\max_{\substack{X\in T_pM\\ |X|=1}}\langle h(X,X),JX\rangle,
$$
with equality holds only for the totally geodesic sphere (corresponding
to $|h|^2=0$) and the Berger sphere of Dillen-Verstraelen-Vrancken
(corresponding to $|h|^2=\tfrac54+\tfrac32\Theta^2$ with $|h|^2\equiv\tfrac{25}8$
and $\Theta\equiv\tfrac{\sqrt{5}}2$). Subsequently, Luo-Sun-Yin \cite{LuoSunYin} 
obtained a similar result for closed minimal Legendrian submanifold in the unit 
sphere $\mathbb S^{2n+1}\ (n\ge 2)$, and the estimates still retain the geometric 
quantity $\Theta$. To improve the above result, our main theorem of this
paper gives a pinching condition involving only $|h|^2$.

\begin{theorem}\label{thm:1.2}
Let $M^3$ be a connected closed Lagrangian submanifold of the homogeneous
nearly K\"ahler sphere $\mathbb S^6(1)$. If it satisfies
$$
0\leq |h|^2\leq\tfrac{25}{8},
$$
or equivalently, the scalar curvature satisfies $\tau\ge\tfrac{23}8$,
then either $M^3$ is totally geodesic (corresponding to $|h|^2=0$, or equivalently $\tau=6$),
or $M^3$ is congruent to the Berger sphere of Dillen-Verstraelen-Vrancken (corresponding
to $|h|^2=\tfrac{25}8$, or equivalently $\tau=\tfrac{23}8$).
\end{theorem}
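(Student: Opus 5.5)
The plan is to derive a new integral inequality of Simons type, as announced in the abstract, in which the auxiliary quantity $\Theta$ of Hu--Yin--Yin is eliminated in favour of $|h|^2$ and $|\nabla h|^2$. The shape I aim for is
$$
\int_M |h|^2\Bigl(|h|^2-\tfrac{25}{8}\Bigr)\,F\dd M\;\ge\;0,
$$
with $F$ a nonnegative weight, possibly constant, or more precisely an inequality whose integrand is pointwise $\le 0$ under the hypothesis $|h|^2\le\tfrac{25}8$.

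First I compute $\tfrac12\Delta|h|^2=|\nabla h|^2+\langle h,\Delta h\rangle$. Here I use that $h$ is determined by the totally symmetric trace-free cubic form $\sigma(X,Y,Z)=\langle h(X,Y),JZ\rangle$, together with the Codazzi equation corrected by the nearly K\"ahler term $G(X,Y)=(\bar\nabla_XJ)Y$. This yields a Simons-type identity
$$
\tfrac12\Delta|h|^2=|\nabla h|^2+\tfrac{15}{4}|h|^2-\bigl(\text{quartic in }\sigma\bigr).
$$
Next I need a second ingredient: a Kato-type lower bound for $|\nabla h|^2$. Ejiri's and Dillen--Vrancken's formulas show that $\nabla\sigma$ contains the forced component $\tfrac12\langle G(\cdot,\cdot),\cdot\rangle$ contracted with $\sigma$, so one gets $|\nabla h|^2\ge c\,|h|^2$ plus a term depending on the cubic. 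To combine the two, I choose at each point an orthonormal frame in which $\sigma$ takes a normal form, for example $e_1$ realizing $\Theta$ so that $\sigma_{111}=\Theta$ and $\sigma_{11j}=0$ for $j\ne1$. I then express the quartic term, the Kato remainder and $|h|^2$ in terms of $\Theta$ and the remaining components.

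The crucial step is to add a suitable multiple of the Hu--Yin--Yin type estimate to the Simons identity. The multiple is tuned so that $\Theta$ cancels, or enters only through a square that can be dropped. Integrating then gives $\int_M |h|^2(|h|^2-\tfrac{25}8)\ge 0$ or a weighted variant. A useful alternative device is to integrate $\Delta|h|^4$ or $\operatorname{div}$ of a vector field built from $\sigma$, which would produce extra $|\nabla h|^2|h|^2$ terms to absorb $\Theta$.

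Given the integral inequality, the pinching $|h|^2\le\tfrac{25}8$ forces $|h|^2(|h|^2-\tfrac{25}8)\equiv0$, and every dropped term must vanish. By continuity and connectedness, either $|h|^2\equiv0$, which is the totally geodesic case, or $|h|^2\equiv\tfrac{25}8$. If $\{h=0\}$ has empty interior but is nonempty, an analyticity argument applies, since Lagrangian submanifolds are minimal and hence real analytic. In the second case the equality conditions pin down $\nabla h$ and the normal form of $\sigma$ (e.g.\ $\Theta\equiv\tfrac{\sqrt5}2$). I then invoke the Hu--Yin--Yin equality characterization, or directly the Dillen--Verstraelen--Vrancken classification after checking $K\ge\tfrac1{16}$, to conclude congruence with the Berger sphere.

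The main obstacle is the algebraic step: finding the pointwise inequality that bounds the quartic term minus the Kato gain by $|h|^2\cdot\tfrac{25}8$-type expressions, uniformly over all trace-free symmetric cubics on $\mathbb R^3$. I would attack it by reducing via $SO(3)$ to a low-dimensional family (the 7-dimensional space of cubics modulo the 3-dimensional group) and maximizing with Lagrange multipliers. I expect the extremal cubic to be that of the Berger sphere.
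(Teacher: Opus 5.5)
There is a genuine gap. The step that must produce the constant $\tfrac{25}{8}$ is left open, and the mechanism you propose for it cannot work.

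\textbf{Why the proposed mechanism fails.} Integrated, the Simons identity is $\int_M(|T|^2+\tfrac{15}{4}s+s^2-5q)\dd M=0$, with $s=|h|^2$ and $q=\tr S^2$. It contains no $\Theta$, so no multiple of it can cancel the $\Theta$ in the Hu--Yin--Yin inequality $\int_M s(s-\tfrac54-\tfrac32\Theta^2)\dd M\ge0$.

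More decisively, take $\alpha,\beta\ge0$ and form $\alpha\cdot(\text{HYY})+\beta\cdot(\text{Simons with }|T|^2\text{ dropped})$. Its integrand is nonpositive on $\{s\le\tfrac{25}{8}\}$ only if $(\alpha-\beta)s^2+5\beta q-\tfrac32\alpha s\Theta^2\le(\tfrac25\alpha+\tfrac65\beta)s^2$ for every trace-free cubic.
\begin{itemize}
\item For the Berger cubic ($q=\tfrac{11}{25}s^2$, $\Theta^2=\tfrac25 s$) this is an equality.
\item For the planar harmonic cubic $h_{222}=-h_{233}=u$ (other independent components zero) it fails. There $q=\tfrac12 s^2$ and $\Theta^2=\tfrac14 s$, and the excess is $(\tfrac{9}{40}\alpha+\tfrac{3}{10}\beta)s^2>0$.
\end{itemize}
This planar cubic saturates $q\le\tfrac12 s^2$, and it is exactly what stops Simons' identity at Hou's constant $\tfrac52$. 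At a point with $T=0$ (hence $\nabla s=0$) the Simons integrand reduces to $\tfrac32 s(\tfrac52-s)$. This is negative for $\tfrac52<s\le\tfrac{25}{8}$, whatever positive weight $\phi(s)$ you multiply by.

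So, for the quantities your plan has access to, the extremal configuration is this planar cubic, not the Berger one, and a Lagrange-multiplier analysis cannot rescue it. Your Kato-type bound does not help either: $|\nabla h|^2\ge\tfrac34 s$ is just the identity $|\nabla h|^2=|T|^2+\tfrac34 s$, which already produced the coefficient $\tfrac{15}{4}$.

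\textbf{The missing idea: a second, independent integral identity.} The paper obtains it in Lemma~\ref{lem:2.4} by integrating the contracted Bianchi identity together with the Ricci commutation formula for $\nabla\Ric$. This brings in $\sum_{i,j,k}S_{ij,k}S_{ik,j}$ and the cubic invariant $r=\tr S^3$.

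The derivative term is then bounded below as follows.
\begin{itemize}
\item Decompose $S_{ij,k}=P_{ijk}+P_{jik}+D_{ijk}$, where $P_{ijk}=\sum_{a,b}h_{abi}T_{abjk}$ and $D$ is algebraic in $S$.
\item A trace estimate for symmetric cubics yields a term $\tfrac{9}{40}|\nabla s|^2$.
\item Let $A$ be the symmetric part of $W_{ij}=\sum_{a,b}\omega_{iab}P_{abj}$. Then $|A|^2\le\tfrac5{12}s|T|^2$ and $\langle A,S\rangle^2\le\tfrac1{12}(-4s^3+17sq-15r)|T|^2$.
\end{itemize}
Thus $|T|^2$ is not discarded; it absorbs the cross terms.

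Adding $\tfrac{7}{36}$ times the $s$-weighted Simons identity gives
$0\ge\int_M(\tfrac{13}{180}|\nabla s|^2+\tfrac1{180}s|T|^2+F(s,q,r))\dd M$, where
$F=\tfrac{25-8s}{240s}(2s^3+10sq+15r)+\tfrac7{60}(s^3+3sq-10r)+\tfrac4{45}(-4s^3+17sq-15r)$.
By the two algebraic inequalities of Lemma~\ref{lem:3.4}, $F\ge0$ for $s\le\tfrac{25}{8}$.

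This integrand is not of your form $s(s-\tfrac{25}{8})\cdot(\text{weight})$. Its dependence on $q$ and $r$ is what disposes of the planar cubic: there $F=s^2(\tfrac{215}{192}-\tfrac{7}{24}s)>0$.

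Your passing suggestion of integrating a divergence built from $\sigma$ points in this direction. But without a specific identity of this kind and the $T$-estimates, the argument has no engine. Once $T=0$ is obtained, your endgame through the $J$-parallel classification and Dillen--Verstraelen--Vrancken is right, and no analyticity is needed.
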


\begin{remark}\label{rem:1.2}
{\rm Theorem~\ref{thm:1.2} implies that every non-totally-geodesic closed Lagrangian submanifold has
$\max|h|^2\ge\tfrac{25}8$, and equality characterizes the Berger sphere of Dillen-Verstraelen-Vrancken.}
\end{remark}

\begin{remark}\label{rem:1.3}
{\rm To the best of our knowledge, Theorem \ref{thm:1.2} is the first genuinely nontrivial sharp pinching
result for the second fundamental form of high-dimensional minimal Lagrangian submanifolds. Unlike
the hypersurface case, the rank of the normal bundle of a Lagrangian submanifold equals the
dimension of its tangent bundle. Thus, the normal geometry cannot be reduced to a single shape
operator, which makes the analysis substantially more intricate and difficult. The main idea is
to combine the classical Simons' integral identity with an integral identity involving the covariant
derivative of the Ricci tensor. This combination yields precise control of the second fundamental
form and leads to the optimal pinching constant.}
\end{remark}

Indeed, our proof of Theorem \ref{thm:1.2} is facilitated by the discovery of 
a new integral inequality as stated in Proposition \ref{prop:3.1} below. It is 
remarkable that, by applying this new integral inequality, we can further prove 
the following sharp Ricci curvature pinching theorem.

\begin{theorem}\label{thm:1.1}
Let $M^3$ be a connected closed Lagrangian submanifold of the homogeneous
nearly K\"ahler sphere $\mathbb S^6(1)$. If the Ricci curvature $\Ric$ of
$M$ satisfies
$$
\Ric\geq\tfrac18,
$$
then either $M^3$ is totally geodesic, or $M^3$ has constant sectional 
curvature $\tfrac1{16}$, or $M^3$ is congruent to the Berger sphere of 
Dillen-Verstraelen-Vrancken.
\end{theorem}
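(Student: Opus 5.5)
We plan to prove Theorem~\ref{thm:1.1} by combining the Gauss equation with the integral inequality of Proposition~\ref{prop:3.1}, which we take to be the key new tool. For a Lagrangian $M^3\subset\mathbb S^6(1)$ write $h(e_i,e_j)=\sum_k h_{ij}^kJe_k$. Here $h_{ij}^k$ is totally symmetric and trace-free in each pair. The Gauss equation gives
\[
\Ric(X,X)=2|X|^2-\sum_{i}|h(X,e_i)|^2 .
\]
Hence $\Ric\ge\tfrac18$ is equivalent to the pointwise bound $\sum_i|h(X,e_i)|^2\le\tfrac{15}{8}$ for all unit $X$. Tracing over an orthonormal frame yields $|h|^2\le\tfrac{45}{8}$, which is too weak to invoke Theorem~\ref{thm:1.2} directly. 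The plan is therefore to use the full quadratic-form bound $\Ric\ge \tfrac18 g$ rather than only its trace.

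\textbf{Step 1.} Recall the structure of Proposition~\ref{prop:3.1}: it should be of the form
\[
\int_M \Phi(h)\dd M\ge 0,
\]
where $\Phi$ is built from the Simons identity together with the identity for $\nabla\Ric$. The quartic terms in $\Phi$ are expressible through $|h|^2$, $|\Ric|^2$ (equivalently $\sum_{i,j}(\sum_{k,l} h^k_{il}h^k_{jl})^2$), and the remaining quadratic curvature terms. We will rewrite the integrand in terms of the eigenvalues $\rho_1,\rho_2,\rho_3$ of the symmetric endomorphism $A:=2\,\mathrm{Id}-\Ric$, so that $A_{ij}=\sum_{k,l}h^k_{il}h^k_{jl}$, $\tr A=|h|^2$ and $|A|^2=|\Ric-2g|^2$. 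The hypothesis becomes $0\le\rho_i\le\tfrac{15}8$.

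\textbf{Step 2.} Show that, under $0\le\rho_i\le \tfrac{15}8$, the integrand satisfies $\Phi\le 0$ pointwise. Integrating then forces $\Phi\equiv0$, and every inequality used must be an equality. The natural route is to write $\Phi$ as $\sum_i\rho_i(\tfrac{15}8-\rho_i)\cdot(\text{nonnegative})$ plus a nonpositive remainder. Alternatively, one can use algebraic inequalities for trace-free symmetric cubics in dimension $3$, applied in a frame diagonalizing $A$, to reduce to a polynomial inequality in $(\rho_1,\rho_2,\rho_3)$ on the cube $[0,\tfrac{15}8]^3$.

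\textbf{Step 3.} Analyze the equality cases. Equality should force one of the following at each point: (a) $h=0$; (b) $A=\tfrac{15}8\,\mathrm{Id}$, i.e.\ $\Ric=\tfrac18$ with $|h|^2=\tfrac{45}8$, which by Ejiri's classification combined with the vanishing of the gradient terms gives constant sectional curvature $\tfrac1{16}$; or (c) the spectrum of the Berger sphere of Dillen--Verstraelen--Vrancken. In the Berger case the eigenvalues of $\Ric$ are $\tfrac18$ (once) and higher values. By connectedness and analyticity (minimal submanifolds are real analytic), the case is constant on $M$. In case (c) we get $|h|^2=\tfrac{25}8$ together with $\Ric\ge\tfrac18$, and we then invoke Theorem~\ref{thm:1.2} (or the classification \cite{DVV}) to conclude congruence with the Berger sphere.

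The main obstacle is Step 2, namely verifying the pointwise sign of the integrand under only the eigenvalue bounds. The cubic form has $7$ independent components, while $A$ captures only $6$ of them plus an orbit ambiguity. Some quartic invariant, such as $\sum(h^k_{ij}h^k_{lm})^2$ or the $\Theta$-type term, may fail to be controlled by $A$ alone. I would first try to remove that invariant by a suitable linear combination of the Simons identity and the $\nabla\Ric$ identity, which is presumably exactly what Proposition~\ref{prop:3.1} achieves. If that fails, I would fall back on a Lagrange-multiplier analysis on the compact set of normalized cubics with prescribed $A$.
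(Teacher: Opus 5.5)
Your overall strategy is the paper's: reduce Proposition~\ref{prop:3.1} to a pointwise sign of its algebraic part, using that $S=2g-\Ric$ (your $A$) has eigenvalues in $[0,\tfrac{15}8]$. However, Step~2 as you formulate it is false, and what is missing is exactly the ingredient that makes the proof work. The algebraic part of \eqref{eqn:3.24} is $F(s,q,r)$ with $s=\tr S$, $q=\tr S^2$, $r=\tr S^3$. Lemma~\ref{lem:3.3} has already absorbed the non-spectral quantities into $s|T|^2$, so invariants not controlled by the spectrum of $S$ are not the obstacle.

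The obstacle is that $F\geq0$ fails on the cube $[0,\tfrac{15}8]^3$. At $\mu=(\tfrac{15}8,0,0)$ one has $q=s^2$, $r=s^3$, and $F=s^2(\tfrac{45}{16}-\tfrac{16}{9}s)=-\tfrac{5625}{3072}<0$. This spectrum is not realized by any trace-free symmetric cubic: rank-one $S$ leaves only $h_{111}$, which trace-freeness kills. You need a quantitative version of this non-realizability, and the paper supplies it in Lemma~\ref{lem:3.4}: $s^3+3sq-10r\geq0$ for every trace-free symmetric cubic in dimension three. It is proved in the adapted frame \eqref{eqn:3.17}, with $e_1$ minimizing $h(x,x,x)$ on the unit sphere, via the sum of squares $\tfrac13(s^3+3sq-10r)=[6at-b(b^2-a^2)]^2+(2a-b)^2(3a+b)^2t+20ab^3w^2$. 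At $(\tfrac{15}8,0,0)$ this quantity equals $-6s^3$.

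Lemma~\ref{lem:4.1} then proves $F\geq0$ only on the cube intersected with $\{s^3+3sq-10r\geq0\}$. Since $11s^3-55sq+108r>0$, the map $\lambda\mapsto\lambda^{-2}F(\lambda s,\lambda^2q,\lambda^3r)$ is decreasing, which reduces everything to $\mu_1=\tfrac{15}8$. There the constraint forces $t=(\mu_2+\mu_3)/\mu_1\geq\tfrac23$. Then $F$ is a positive multiple of a polynomial $\mathcal P(t,v)$ that is affine in $v=\mu_2\mu_3/\mu_1^2$, and it is checked at the endpoints of the admissible $v$-interval. Without this realizability constraint, neither a factorization $\sum_i\rho_i(\tfrac{15}8-\rho_i)\cdot(\cdots)$ nor any polynomial inequality on the cube can succeed.

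Your Step~3 is also more roundabout than necessary. The integrand in \eqref{eqn:3.24} is $\tfrac{13}{180}|\nabla s|^2+\tfrac1{180}s|T|^2+F$. Once $F\geq0$, you immediately get $\nabla s=0$ and $s|T|^2=0$, hence $T=0$. Lemma~\ref{lem:2.2} then gives $K\geq\tfrac1{16}$, and the global classification of \cite{DVV} finishes the proof, with no equality-case analysis of $F$ and no analyticity. Your trichotomy could be rescued: the zero set of $F$ on the admissible region consists of $s=0$ and the spectra $(\tfrac{15}8,\tfrac58,\tfrac58)$ and $(\tfrac{15}8,\tfrac{15}8,\tfrac{15}8)$, and constancy of $s$ makes the case global. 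But that route needs the strict form of the same Step~2 analysis, plus an extra appeal to Theorem~\ref{thm:1.2} in case (c).
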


\begin{remark}\label{rm:1.1}
{\rm As consequence of Theorem \ref{thm:1.1}, we have proved the conjecture
proposed in Hu, Yao and Yin \cite{HYY2020}, claiming that the condition 
$\frac18\leq\Ric\leq\frac{11}{8}$ implies either with constant sectional curvature
$K=\tfrac{1}{16}$ or it congruent to the embedded Berger sphere of Dillen-Verstraelen-Vrancken.
Moreover, Theorem \ref{thm:1.1} generalizes \cite[Main Theorem]{DVV} by replacing 
the condition $K\ge\tfrac{1}{16}$ with a weaker condition $\Ric\ge\tfrac18$.}
\end{remark}

The remaining parts are organized as follows. In Sect. \ref{sect:2}, we
recall the geometric identities and the results quoted from the literature. 
Importantly, we prove Lemma \ref{lem:2.4} about an integral identity involving 
the covariant derivative of the Ricci tensor. In Sect. \ref{sect:3}, we derive 
the needed derivative estimates, the two algebraic inequalities in Lemma \ref{lem:3.3}, 
and finally we prove Proposition \ref{prop:3.1} about the new integral inequality. 
In Sect. \ref{sect:4}, we complete the proofs of Theorem \ref{thm:1.2} and Theorem \ref{thm:1.1}.

\section{Preliminaries}\label{sect:2}

All submanifolds and immersions are assumed smooth, and closed means compact
without boundary. Regard $\mathbb R^7$ as the imaginary Cayley numbers and
denote its cross product by $\times$. The homogeneous nearly K\"ahler
structure on the unit six-sphere $\mathbb S^6(1)$ is
$$
J_xX=x\times X, \qquad x\in \mathbb S^6(1),\quad X\in T_x\mathbb S^6(1).
$$
Let $\overline\nabla$ be the Levi--Civita connection of $\mathbb S^6(1)$ and put
$$
G(X,Y)=(\overline\nabla_XJ)Y.
$$
The identities used below are (see \cite{DVV,Ejiri})
$$
G(X,Y)=-G(Y,X),\ \ G(X,JY)=-JG(X,Y),
$$
$$
\langle G(X,Y),Z\rangle=-\langle G(X,Z),Y\rangle,
$$
\begin{align}\label{eqn:2.1}
\langle G(X,Y),G(Z,W)\rangle
&=\langle X,Z\rangle\langle Y,W\rangle-\langle X,W\rangle\langle Y,Z\rangle \notag\\
&\quad+\langle JX,Z\rangle\langle Y,JW\rangle-\langle JX,W\rangle\langle Y,JZ\rangle,
\end{align}
where $\langle\cdot,\cdot\rangle=g$ denotes the metric induced from the Euclidean inner product on
$\mathbb S^6(1)$.

We recall the standard facts about Lagrangian threefolds in the strict
nearly K\"ahler six-manifold \cite{Ejiri,SchaferSmoczyk,HYY2019}.

Let $x:M^3\rightarrow\mathbb{S}^6(1)$ be a Lagrangian isometric immersion. We denote
the Levi-Civita connection of $M^3$ by $\nabla$ and the normal connection in the
normal bundle $T^\perp M^3$ (defined by the orthogonal projection of $\bar\nabla$
on $T^\perp M^3$) by $\nabla^\perp$. The shape operator $A_\xi$ in the direction of a
normal vector field $\xi$ on $M^3$ and $T^\perp M^3$-valued second fundamental form $h$
are defined by the following Gauss-Weingarten formulas
$$
\bar\nabla_XY=\nabla_XY+h(X,Y),\ \
\bar\nabla_X\xi=-A_{\xi}X+\nabla^{\perp}_X{\xi},
$$
where $X,Y$ are tangent vector fields of $M^3$, and $h$ is related to $A_\xi$ by
$$
\langle A_\xi X,Y\rangle=\langle h(X,Y),\xi\rangle.
$$
Moreover, according to \cite{Ejiri}, the submanifold $M^3$ is orientable
and minimal, and $G(X, Y)$ is normal to $M^3$ for $X, Y\in TM^3$.

Comparison of the tangent and normal parts of $\overline\nabla_X(JY)=G(X,Y)+J\overline\nabla_XY$
with the use of $J(TM^3)=T^\perp M^3$ gives
\begin{equation}\label{eqn:2.2}
\nabla_X^\perp(JY)=G(X,Y)+J\nabla_XY, \qquad A_{JY}X=-Jh(X,Y).
\end{equation}

Put
$$
\omega(X,Y,Z):=\langle G(X,Y),JZ\rangle.
$$
It is the volume form after a choice of orientation.  Thus, in a local
oriented orthonormal frame $\{e_1,e_2,e_3\}$, we can write
$$
G(e_i,e_j)=\sum_k\omega_{ijk}Je_k, \qquad \omega_{123}=1.
$$
From \eqref{eqn:2.1} we get
\begin{equation}\label{eqn:2.3}
\sum_i\omega_{iab}\omega_{icd}=\delta_{ac}\delta_{bd}-\delta_{ad}\delta_{bc}.
\end{equation}

Write
$$
h_{ijk}=\langle h(e_i,e_j),Je_k\rangle, \qquad H_k=(h_{ijk})_{1\leq i,j\leq3}.
$$
The second identity in \eqref{eqn:2.2} implies that
$$
\langle h(X,Y),JZ\rangle=\langle A_{JZ}X,Y\rangle=\langle h(X,Z),JY\rangle.
$$
Together with the symmetry of $h$, this shows that $h_{ijk}$ is totally symmetric.
Minimality of $M^3$ gives
\begin{equation}\label{eqn:2.4}
\sum_i h_{iik}=0,\ \ \forall\, k.
\end{equation}
Consequently each $H_k$ is symmetric and trace-free.  We shall use
$$
S_{ij}=\sum_{a,b}h_{abi}h_{abj},\quad
s=|h|^2=\tr S,\quad q=\tr(S^2),\quad r=\tr(S^3).
$$
In particular, $S=(S_{ij})$ is nonnegative definite and $S_{ij}=\tr(H_iH_j)$.

With the convention $R_{ijkl}=\langle R(e_i,e_j)e_l,e_k\rangle$, the
Gauss equation of $M^3$ reads
$$
R_{ijkl}=\delta_{ik}\delta_{jl}-\delta_{il}\delta_{jk}
    +\sum_p(h_{ikp}h_{jlp}-h_{ilp}h_{jkp}).
$$

Let $R_{ij}:=\sum_kR_{ikjk}$ and $\tau$ denote the Ricci tensor
and scalar curvature. By \eqref{eqn:2.4}, we obtain
\begin{equation}\label{eq:ricci}
R_{ij}=2\delta_{ij}-S_{ij},\qquad \tau=6-s.
\end{equation}

Define the covariant derivative of the normal-valued second fundamental form by
$$
(\nabla h)(X,Y,Z)=\nabla_X^\perp h(Y,Z)-h(\nabla_XY,Z)-h(Y,\nabla_XZ).
$$
Then the Codazzi equation shows that it is symmetric in $X,Y,Z$.  Its components in
the moving normal frame $Je_k$ are not completely symmetric in all four
indices, because $J$ is not parallel. Indeed, we have the following
Codazzi identity (cf. \cite[Lemma~2.2]{HYY2019})
$$
\langle(\nabla h)(W,X,Z),JY\rangle
-\langle(\nabla h)(W,X,Y),JZ\rangle=\langle h(W,X),G(Y,Z)\rangle.
$$

Following \cite[(4.15)]{HYY2019}, we define
\begin{align*}
T(X,Y,Z)=(\nabla h)(X,Y,Z)
-\tfrac14\{G(X,A_{JZ}Y)+G(Y,A_{JX}Z)+G(Z,A_{JY}X)\}.
\end{align*}
The components
$$
T_{lijk}=\langle T(e_l,e_i,e_j),Je_k\rangle
$$
will be used below. We recall the norm identity \cite[Lemma~4.4]{HYY2019}
\begin{equation}\label{eqn:2.6}
|\nabla h|^2=|T|^2+\tfrac34s.
\end{equation}
Here $|\nabla h|$ is the norm of the normal-valued covariant
derivative defined above. We distinguish it from the tangent
covariant derivative of the cubic $h_{ijk}$ in the next lemma.

\begin{lemma}\label{lem:2.1}
For a Lagrangian submanifold of $\mathbb S^6(1)$,
 a semicolon denotes the covariant derivative of the cubic $h_{ijk}$.
Then it holds that
\begin{equation}\label{eqn:2.7}
h_{ijk;l}=T_{lijk} +\tfrac14\sum_m\bigl( \omega_{lim}h_{mjk}
+\omega_{ljm}h_{imk} +\omega_{lkm}h_{ijm}\bigr).
\end{equation}
Moreover, the tensor $T_{lijk}$ is totally symmetric and trace-free.
\end{lemma}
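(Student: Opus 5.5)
We will prove \eqref{eqn:2.7} by computing $h_{ijk;l}$ directly from the normal-valued derivative $\nabla h$, and then use the definition of $T$ to identify the correction terms.

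\emph{Step 1: relate $h_{ijk;l}$ to $\nabla h$.} By definition $h_{ijk}=\langle h(e_i,e_j),Je_k\rangle$, so
$$
h_{ijk;l}=e_l\langle h(e_i,e_j),Je_k\rangle-\langle h(\nabla_{e_l}e_i,e_j),Je_k\rangle-\langle h(e_i,\nabla_{e_l}e_j),Je_k\rangle-\langle h(e_i,e_j),J\nabla_{e_l}e_k\rangle .
$$
Differentiating the first term with the normal connection and using the first identity of \eqref{eqn:2.2}, namely $\nabla^\perp_{e_l}(Je_k)=G(e_l,e_k)+J\nabla_{e_l}e_k$, the $J\nabla_{e_l}e_k$ contributions cancel. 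This gives
$$
h_{ijk;l}=\langle(\nabla h)(e_l,e_i,e_j),Je_k\rangle+\langle h(e_i,e_j),G(e_l,e_k)\rangle
=\langle(\nabla h)(e_l,e_i,e_j),Je_k\rangle+\sum_m\omega_{lkm}h_{ijm}.
$$

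\emph{Step 2: expand $T$ in components.} Using $A_{JZ}Y=-Jh(Y,Z)$ from \eqref{eqn:2.2}, $G(X,JY)=-JG(X,Y)$, and $G(e_a,e_b)=\sum\omega_{abc}Je_c$, we get
$$
\langle G(e_l,A_{Je_j}e_i),Je_k\rangle=\sum_m h_{ijm}\langle JG(e_l,e_m),Je_k\rangle=\sum_m h_{ijm}\omega_{lmk}=-\sum_m\omega_{lkm}h_{ijm}.
$$
The other two terms in the definition of $T$ are computed the same way, each keeping track of which slot carries the $G$. Substituting into $T_{lijk}$, we combine with Step 1 and simplify using the antisymmetry of $\omega$ and the total symmetry of $h_{ijk}$. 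The target is to recover exactly $\tfrac14\sum_m(\omega_{lim}h_{mjk}+\omega_{ljm}h_{imk}+\omega_{lkm}h_{ijm})$. This bookkeeping is the main obstacle. The term $\sum_m\omega_{lkm}h_{ijm}$ from Step 1 has coefficient $1$, and it must be redistributed symmetrically via the Codazzi identity
$$
\langle(\nabla h)(W,X,Z),JY\rangle-\langle(\nabla h)(W,X,Y),JZ\rangle=\langle h(W,X),G(Y,Z)\rangle .
$$
Concretely, we will write $\langle(\nabla h)(e_l,e_i,e_j),Je_k\rangle$ as the average of the three expressions obtained by moving $k$ into each slot via Codazzi. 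Equivalently, we first show that $h_{ijk;l}$ is symmetric in $i,j,k$, which is clear since $h_{ijk}$ is, and then check that the right side of \eqref{eqn:2.7} matches.

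\emph{Step 3: symmetry and trace of $T$.} Since $h_{ijk;l}$ is symmetric in $(i,j,k)$ and the $\omega$-correction in \eqref{eqn:2.7} is manifestly symmetric in $(i,j,k)$, $T_{lijk}$ is symmetric in $i,j,k$. For the swap $l\leftrightarrow i$, we use the Codazzi equation (symmetry of $\nabla h$ in its three tangent arguments) together with the fact that the $G$-correction in $T$ is cyclically symmetric in $X,Y,Z$. Hence $\langle T(e_l,e_i,e_j),Je_k\rangle$ is symmetric in $l,i,j$. Combined with symmetry in $(i,j,k)$, this yields total symmetry in all four indices. For trace-freeness, we contract $i=j$ in \eqref{eqn:2.7}. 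The left side gives $\sum_i h_{iik;l}=0$ by \eqref{eqn:2.4}. On the right, $\sum_{i,m}\omega_{lim}h_{mik}$ vanishes because $\omega$ is antisymmetric in $(i,m)$ while $h$ is symmetric, and the same holds for the second term. The third term gives $\sum_m\omega_{lkm}\sum_i h_{iim}=0$. Therefore $\sum_iT_{liik}=0$, and by total symmetry every trace of $T$ vanishes.
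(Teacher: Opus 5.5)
Steps 1 and 3 agree with the paper. Step 2, the only substantive step, is left as ``bookkeeping'', and the tools you name for it (skew-symmetry of $\omega$, total symmetry of $h_{ijk}$, averaging via Codazzi) do not close it. After Steps 1 and 2 you have
\[
h_{ijk;l}=T_{lijk}+\tfrac14\sum_m\bigl(\omega_{lmk}h_{ijm}+\omega_{imk}h_{jlm}+\omega_{jmk}h_{lim}\bigr)+\sum_m\omega_{lkm}h_{ijm}.
\]
So the claimed formula is equivalent to the purely algebraic identity
\[
\sum_m\bigl(2\omega_{lkm}h_{ijm}+\omega_{imk}h_{jlm}+\omega_{jmk}h_{lim}\bigr)=\sum_m\bigl(\omega_{lim}h_{mjk}+\omega_{ljm}h_{imk}\bigr).
\]
This is false for a general totally symmetric cubic. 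If only $h_{111}=1$ is nonzero and $(l,k,i,j)=(2,3,1,1)$, the left side is $2$ and the right side is $0$.

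The missing ingredient is trace-freeness of $h$ (minimality) in dimension three. The paper packages it as
\[
\sum_m\bigl(\omega_{abm}h_{mcd}+\omega_{bcm}h_{mad}+\omega_{cam}h_{mbd}\bigr)=0 .
\]
The left side is alternating in $a,b,c$, so it suffices to take $(a,b,c)=(1,2,3)$, where it equals $\sum_m h_{mmd}=0$. Applying it with $(a,b,c,d)=(l,i,k,j)$ and $(l,j,k,i)$ gives
\[
\sum_m\omega_{imk}h_{jlm}=\sum_m(\omega_{lim}h_{mjk}-\omega_{lkm}h_{ijm}),\qquad
\sum_m\omega_{jmk}h_{lim}=\sum_m(\omega_{ljm}h_{imk}-\omega_{lkm}h_{ijm}),
\]
and the required identity follows.

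Your ``equivalent'' route does not supply this. Symmetry of $h_{ijk;l}$ in $i,j,k$, combined with Step 1, only yields the slot-exchange rule
\[
\langle(\nabla h)(e_l,e_i,e_j),Je_k\rangle-\langle(\nabla h)(e_l,e_i,e_k),Je_j\rangle=\sum_m\bigl(\omega_{ljm}h_{ikm}-\omega_{lkm}h_{ijm}\bigr),
\]
which never sees the trace of $h$. If you carry out the averaging over the three $J$-slot positions using the Codazzi identity, you are left needing exactly the algebraic identity displayed above.

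The quoted Codazzi identity does encode the missing information. Equate its right-hand side $\sum_m\omega_{kjm}h_{lim}$ with the slot-exchange rule above: this gives precisely the three-dimensional identity with $(a,b,c,d)=(l,j,k,i)$, and swapping $i,j$ gives the case $(l,i,k,j)$. So your route can be completed, but only once you extract that identity explicitly and use it twice, as the paper does.

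Two smaller slips:
\begin{itemize}
\item In Step 2 the middle expression should be $\sum_m h_{ijm}\langle G(e_l,e_m),Je_k\rangle$. As written, $\langle JG(e_l,e_m),Je_k\rangle=\langle G(e_l,e_m),e_k\rangle=0$, though your final value is right.
\item In Step 3 the correction term in $T$ must be \emph{fully} symmetric in $X,Y,Z$, not just cyclically symmetric. This holds because $A_{JZ}Y=-Jh(Y,Z)$ is symmetric in $Y,Z$.
\end{itemize}
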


\begin{proof}
Fix a point and choose a local orthonormal tangent frame field $\{e_1,e_2,e_3\}$
around it such that $\nabla e_i=0$ there. By \eqref{eqn:2.2}, we have
$$
\nabla_{e_l}^{\perp}(Je_k)=G(e_l,e_k)=\sum_m\omega_{lkm}Je_m.
$$
It follows from $h_{ijk}=\langle h(e_i,e_j),Je_k\rangle$ that
$$
h_{ijk;l}=\langle(\nabla h)(e_l,e_i,e_j),Je_k\rangle+\sum_m\omega_{lkm}h_{ijm}.
$$

Then, by the definition of $T$ and $A_{JY}X=-Jh(X,Y)$ we can derive that
\begin{equation}\label{eqn:2.8}
h_{ijk;l}=T_{lijk}+\tfrac14\sum_m\bigl(\omega_{lmk}h_{ijm}
 +\omega_{imk}h_{jlm}+\omega_{jmk}h_{lim}\bigr)+\sum_m\omega_{lkm}h_{ijm}.
\end{equation}

Since $\omega_{ijk}$ is totally skew-symmetric and $h_{ijk}$ is totally
symmetric trace-free,  in dimension three we can easily show that
\begin{equation}\label{eqn:2.9}
\sum_m\bigl(\omega_{abm}h_{mcd}+\omega_{bcm}h_{mad}+\omega_{cam}h_{mbd}\bigr)=0,\ \ \forall\,a,b,c,d.
\end{equation}
Indeed, the left side of \eqref{eqn:2.9} is alternating in $a,b,c$;
and for $(a,b,c)=(1,2,3)$ the left side of \eqref{eqn:2.9} is
$\omega_{123}\sum_mh_{mmd}=0$.

Now, taking $(a,b,c,d)=(l,i,k,j)$ and $(l,j,k,i)$ in
\eqref{eqn:2.9}, respectively, we obtain
\begin{align*}
 \sum_m\omega_{imk}h_{jlm}
 &=\sum_m(\omega_{lim}h_{mjk}-\omega_{lkm}h_{ijm}),\\
 \sum_m\omega_{jmk}h_{lim}
 &=\sum_m(\omega_{ljm}h_{imk}-\omega_{lkm}h_{ijm}).
\end{align*}
Substituting these equations into \eqref{eqn:2.8} we get
\eqref{eqn:2.7}.

To show that $T_{lijk}$ is totally symmetric we first see from
the definition of $T$ that it is symmetric in the first three indices, then
we apply \eqref{eqn:2.7} to see that the last three indices of $T_{lijk}$
is symmetric. Obviously, these two symmetries make $T_{lijk}$ totally symmetric.

Finally, contracting $i,j$ in \eqref{eqn:2.7} and using \eqref{eqn:2.4}
we immediately get $\sum_iT_{liik}=0$.

We have completed the proof of Lemma \ref{lem:2.1}.
\end{proof}

\begin{lemma}[\cite{DjoricVrancken,HYY2019}]\label{lem:2.2}
The condition $T=0$ is equivalent to the $J$-parallel condition
$$
\langle(\nabla h)(X,X,X),JX\rangle=0,\ \ \forall\,X.
$$
A $J$-parallel Lagrangian threefold of $\mathbb S^6(1)$ is locally
one of the following: the totally geodesic sphere, the sphere of constant sectional
curvature $\tfrac1{16}$, or the Berger sphere of Dillen-Verstraelen-Vrancken.
Their values of $s$ are $0$, $\tfrac{45}8$ and $\tfrac{25}8$, respectively; whereas
the corresponding values of $\tau$ are $6$, $\tfrac38$ and $\tfrac{23}8$, respectively.
All the three models satisfy $K\ge\tfrac1{16}$.
\end{lemma}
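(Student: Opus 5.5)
The equivalence in Lemma~\ref{lem:2.2} should follow directly from Lemma~\ref{lem:2.1}; for the classification I would follow the route of Djori\'c--Vrancken \cite{DjoricVrancken} and \cite{HYY2019}, using \eqref{eqn:2.7} to turn $T=0$ into an algebraic problem on the cubic $h_{ijk}$.

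\emph{The equivalence.} For a unit vector $X$, complete it to an orthonormal frame with $e_l=X$. By the definition of $T$,
$$\langle(\nabla h)(X,X,X),JX\rangle=T_{llll}+\tfrac34\langle G(X,A_{JX}X),JX\rangle,$$
and the correction term equals $\omega(X,A_{JX}X,X)=0$ because $\omega$ is skew. Hence the $J$-parallel condition says exactly that $T(X,X,X,X)=0$ for every $X$. Since $T_{lijk}$ is totally symmetric by Lemma~\ref{lem:2.1}, polarization of the quartic form $X\mapsto T(X,X,X,X)$ gives $T=0$. The converse is immediate from the same identity.

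\emph{Reduction of the classification to algebra.} If $T=0$, then \eqref{eqn:2.7} gives $h_{ijk;l}$ algebraically in terms of $h$ and $\omega$. I would then:
\begin{enumerate}
\item differentiate \eqref{eqn:2.7} once more, again using \eqref{eqn:2.7} and $\omega_{ijk;l}=0$ (since $\omega$ is the volume form);
\item impose the Ricci identity
$$h_{ijk;lm}-h_{ijk;ml}=-\sum_p\bigl(R_{pilm}h_{pjk}+R_{pjlm}h_{ipk}+R_{pklm}h_{ijp}\bigr),$$
with $R$ given by the Gauss equation.
\end{enumerate}
This yields a system of polynomial identities, cubic in $h$ on the right and quadratic plus linear on the left, satisfied by a symmetric trace-free cubic on $\mathbb R^3$.

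\emph{Solving the system.} At each point I would use Ejiri/Vrancken's frame choice: $e_1$ maximizes $f(X)=\langle h(X,X),JX\rangle$ on the unit sphere. This gives $h_{112}=h_{113}=0$ and $h_{111}\ge 2h_{1jj}$. A rotation in $\{e_2,e_3\}$ then diagonalizes $H_1$, reducing $h$ to a few parameters. Substituting into the integrability system should force exactly three solutions:
\begin{itemize}
\item $h=0$;
\item a cubic with $S$ a multiple of the identity and $s=\tfrac{45}8$;
\item a cubic with one distinguished eigendirection of $S$ and $s=\tfrac{25}8$.
\end{itemize}
In each case the invariants are constant. Because the structure equations then have constant coefficients in a suitable frame, the immersion is determined up to $G_2$-congruence by these algebraic data (a Cartan-type uniqueness argument). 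Comparing with the known models identifies them as the totally geodesic sphere, Ejiri's sphere $\mathbb S^3(\tfrac1{16})$, and the Dillen--Verstraelen--Vrancken Berger sphere. The values $\tau=6-s$ then follow from \eqref{eq:ricci}, and $K\ge\tfrac1{16}$ follows from the Gauss equation evaluated on each normal form.

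\emph{Main obstacle.} I expect the hard step to be solving the integrability system: showing that the only solutions are the three normal forms, including excluding non-constant parameter families. I would first extract the trace parts, namely the identities obtained by contracting the Ricci identity, which constrain $s$, $q=\tr(S^2)$ and $r=\tr(S^3)$. I would use these to pin down the eigenvalue pattern of $S$ before analyzing the full component equations.
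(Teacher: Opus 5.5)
Your proof of the equivalence is correct and complete. Since $(\nabla h)(X,X,X)=T(X,X,X)+\tfrac34G(X,A_{JX}X)$ and $\langle G(X,A_{JX}X),JX\rangle=\omega(X,A_{JX}X,X)=0$, the $J$-parallel condition is exactly the vanishing of the quartic form of the totally symmetric tensor $T_{lijk}$ from Lemma~\ref{lem:2.1}, and polarization gives $T=0$.

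For the classification the paper gives no argument of its own: it quotes \cite{DjoricVrancken,HYY2019} and \cite{DVV}. Your plan (differentiate \eqref{eqn:2.7} with $T=0$, impose the Ricci identity and the Gauss equation, solve pointwise) is the standard frame-and-integrability route of that literature, and the three outcomes you predict are the right ones. As written, however, the decisive step is asserted rather than derived: ``the integrability system should force exactly three solutions.'' So this part is a plan, not yet a proof. Also, the left-hand side of that system is of the form $\omega\omega h$, hence linear in $h$; there is no quadratic term.

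You can close the step without any case analysis in Vrancken's frame.

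\emph{Reformulate \eqref{eqn:2.7}.} With $T=0$, \eqref{eqn:2.7} says $\nabla_Xh=-\tfrac14\rho(X)\cdot h$. Here $\rho(X)Y=X\times Y$ is defined by $\langle X\times Y,Z\rangle=\omega(X,Y,Z)$, and $\cdot$ is the derivation action on cubics. Thus $h$ is parallel for the metric connection $\tilde\nabla_XY=\nabla_XY+\tfrac14X\times Y$.

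\emph{Curvature of $\tilde\nabla$.} Because $\omega$ is parallel and $[\rho(X),\rho(Y)]=\rho(X\times Y)$, we get $\tilde R(X,Y)=R(X,Y)+\tfrac1{16}\rho(X\times Y)$. The whole integrability system is then $\tilde R(X,Y)\cdot h=0$.

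\emph{Reduce to eigenvalues.} In a frame diagonalizing $S$, dimension three gives $R(e_i,e_j)=-K_{ij}\,\rho(e_k)$ for $(i,j,k)$ cyclic, with $K_{ij}=1+\mu_k-\tfrac12s$ by \eqref{eqn:2.13}. So the system is just
$$\bigl(K_{ij}-\tfrac1{16}\bigr)\,\rho(e_k)\cdot h=0,\qquad \{i,j,k\}=\{1,2,3\}.$$

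\emph{Solve it.} A nonzero trace-free cubic is annihilated by at most a line in $\mathfrak{so}(3)$. It is annihilated by $\rho(e_k)$ only if it is zonal about $e_k$, i.e.\ $h_{kkk}=2\lambda$, $h_{iik}=h_{jjk}=-\lambda$, and all other components zero. Hence pointwise exactly one of the following holds:
\begin{enumerate}
\item $h=0$;
\item all $K_{ij}=\tfrac1{16}$, so $\mu_k=\tfrac{15}8$ and $s=\tfrac{45}8$;
\item $h$ is zonal about an eigendirection $e_3$ with $K_{13}=K_{23}=1-3\lambda^2=\tfrac1{16}$. This gives $\lambda^2=\tfrac5{16}$, $\mu=(\tfrac58,\tfrac58,\tfrac{15}8)$, $s=\tfrac{25}8$ and $K_{12}=1+\lambda^2=\tfrac{21}{16}$, so $K\in[\tfrac1{16},\tfrac{21}{16}]$.
\end{enumerate}

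\emph{Globalize.} Continuity makes $s$ constant on a connected piece. The case $s=\tfrac{45}8$ is constant curvature $\tfrac1{16}$ and is settled by \cite{Ejiri}. In the case $s=\tfrac{25}8$ the zonal axis is $\tilde\nabla$-parallel, i.e.\ $\nabla_Xe_3=-\tfrac14X\times e_3$, so $e_3$ is a unit Killing field. One can then rotate $e_1,e_2$ to obtain constant structure functions, which is exactly the input your Cartan-type congruence argument needs.
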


\begin{remark}\label{rm:2.1}
{\rm For the equivalence and the three local models, see
\cite[Lemmas~4.5--4.6 and Theorem~4.1]{HYY2019} and \cite{DVV}.
We shall also use the global classification in \cite[Main Theorem]{DVV}:
a connected complete Lagrangian threefold with $K\geq1/16$ is either totally
geodesic, or has constant sectional curvature $\tfrac1{16}$, or is congruent
to the Berger sphere of Dillen-Verstraelen-Vrancken. Thus, once $T=0$, this global classification applies to the closed
submanifold.}
\end{remark}

\begin{lemma}\label{lem:2.3}
For a Lagrangian submanifold $M^3$ of $\mathbb S^6(1)$, one has
\begin{equation}\label{eqn:2.10}
\tfrac12\Delta s=|T|^2+\tfrac{15}{4}s+s^2-5q.
\end{equation}
In particular, if $M^3$ is closed, then
\begin{equation}\label{eqn:2.11}
0=\int_M\left(s|T|^2+\tfrac12|\nabla s|^2+\tfrac{15}{4}s^2+s^3-5sq\right)\dd M.
\end{equation}
\end{lemma}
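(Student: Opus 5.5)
The plan is to derive \eqref{eqn:2.10} as a Simons-type identity for the cubic $h_{ijk}$, working with the tangent covariant derivative of Lemma~\ref{lem:2.1}, and then integrate. Since $s=\sum h_{ijk}^2$, we have
$$
\tfrac12\Delta s=\sum h_{ijk;l}^2+\sum h_{ijk}h_{ijk;ll}.
$$

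\emph{Gradient term.} We expand $h_{ijk;l}$ using \eqref{eqn:2.7}. Write $\Omega_{lijk}=\tfrac14\sum_m(\omega_{lim}h_{mjk}+\omega_{ljm}h_{imk}+\omega_{lkm}h_{ijm})$.
\begin{itemize}
\item Because $T$ is totally symmetric and $\omega$ is skew in its first two indices, the cross term $\sum T_{lijk}\Omega_{lijk}$ vanishes.
\item Using \eqref{eqn:2.3} together with tracelessness of $h$, a routine computation should give $\sum\Omega_{lijk}^2=c_1 s$ for an explicit constant $c_1$.
\end{itemize}
So $\sum h_{ijk;l}^2=|T|^2+c_1s$. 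For consistency with \eqref{eqn:2.6}, I would expect $c_1$ to be close to, but not necessarily equal to, $\tfrac34$. The normal-valued and tangent derivatives differ by the $\omega h$ term, so $c_1$ must be computed directly.

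\emph{Laplacian term.}
\begin{itemize}
\item Differentiate \eqref{eqn:2.7} once more and contract to get $\sum_l h_{ijk;ll}$.
\item The total symmetry of $T$ gives $T_{lijk;l}=T_{llij;k}+(\text{Ricci commutation terms})$, and $\sum_l T_{llij}=0$ by tracelessness.
\item Hence $\sum_l T_{lijk;l}$ reduces to curvature terms applied to $T$. Alternatively, and more cleanly, commute derivatives directly on $h$: write $h_{ijk;ll}=h_{lij;kl}$ (symmetry of the cubic) $=h_{lij;lk}+\text{Ricci identity terms}$.
\item Express $h_{lij;l}$ through \eqref{eqn:2.7}. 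The $T$ part has zero trace, and the $\omega h$ part contracts (via skewness and \eqref{eqn:2.9}) to zero or a multiple of $h$. Its further derivative then produces terms $\omega\, h_{;}$, which we re-expand with \eqref{eqn:2.7} to get multiples of $h$ plus $\omega T$ terms.
\item The $\omega T$ terms vanish when paired with $h_{ijk}$ by the symmetry argument above.
\end{itemize}

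\emph{Ricci identity terms.} These are $\sum h_{ijk}(R_{\cdot}h)$. Substituting the Gauss equation and \eqref{eq:ricci} produces
\begin{itemize}
\item a linear part, a multiple of $s$ from $\delta\delta$;
\item quartic parts in $h$, which in terms of $S$ become $\sum S_{ij}^2=q$ and the pairing $\sum(h_{ikp}h_{jlp}-h_{ilp}h_{jkp})$ contracted twice with $h$.
\end{itemize}
For a trace-free symmetric cubic in dimension three, the latter quartic invariants reduce to combinations of $s^2$ and $q$. There are only two independent quartic invariants in $3$ dimensions, and the identity $\sum_{a,b}(H_aH_b-H_bH_a)^2$ expresses this. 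Collecting coefficients should give $\tfrac{15}{4}s+s^2-5q$.

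\emph{Integrated form.} \eqref{eqn:2.11} follows by multiplying \eqref{eqn:2.10} by $s$, integrating, and using $\int s\,\tfrac12\Delta s=-\tfrac12\int|\nabla s|^2$.

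The main obstacle is the bookkeeping of the $\omega$ terms. These are the constant contributions from $\nabla J$: the $\omega h$ piece in $|\nabla h|^2$ and the second-order $\omega\omega h$ terms. Together they must total exactly $\tfrac{15}4 s$ along with the sphere curvature contribution.

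As a safeguard, I would check the constants on the three $T=0$ models of Lemma~\ref{lem:2.2}, where $s$ is constant and \eqref{eqn:2.10} forces $\tfrac{15}{4}s+s^2=5q$.
\begin{itemize}
\item For $s=\tfrac{45}{8}$ (constant curvature, $S=\tfrac{15}{8}I$, so $q=\tfrac{675}{64}$): $\tfrac{15}4\cdot\tfrac{45}8+\tfrac{2025}{64}=\tfrac{5400}{64}\cdot\ldots$, and indeed $\tfrac{675}{32}+\tfrac{2025}{64}=\tfrac{3375}{64}=5q$. ✓
\end{itemize}
This check confirms the coefficients.
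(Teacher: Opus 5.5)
There is a genuine gap in your treatment of the Laplacian term. The step $h_{ijk;ll}=h_{lij;kl}$ ``by symmetry of the cubic'' is not valid. Total symmetry of $h_{ijk}$ only lets you permute $i,j,k$. Moving $k$ into the derivative slot is a Codazzi-type symmetry, and for the tangent derivative of the cubic it fails because $J$ is not parallel. Since $T$ is totally symmetric, \eqref{eqn:2.7} together with \eqref{eqn:2.9} gives
\[
h_{ijk;l}-h_{ijl;k}=\sum_m\omega_{lkm}h_{ijm},
\]
which is the paper's Codazzi identity rewritten in the frame $Je_k$. Using $\nabla\omega=0$, the correct expansion is
\[
\sum_l h_{ijk;ll}=\sum_l h_{ijl;lk}+(\text{Ricci-identity terms})+\sum_{l,m}\omega_{lkm}h_{ijm;l}.
\]
Your plan has no source for the last term. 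The divergence $\sum_l h_{lij;l}$ vanishes identically: its $T$-part dies by trace-freeness, and its $\omega h$-part by skew-versus-symmetric cancellation. So differentiating it produces no $\omega\,h_{;}$ terms at all.

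The missing term is not negligible. Inserting \eqref{eqn:2.7}, the $T$-part drops out, and \eqref{eqn:2.3} gives $\sum h_{ijk}\omega_{lkm}h_{ijm;l}=-s$. The Ricci-identity terms contribute $4s+s^2-5q$:
\begin{itemize}
\item two equal terms $s+\tfrac12s^2-2q$, using $\sum_{j,p}\tr(H_jH_pH_jH_p)=q-\tfrac12\sum_{j,p}|[H_j,H_p]|^2=\tfrac12s^2-q$;
\item plus $\sum_{k,m}S_{km}R_{mk}=2s-q$.
\end{itemize}
The gradient term is exactly $|T|^2+\tfrac34s$: your $c_1$ equals $\tfrac34$, and in fact $\sum h_{ijk;l}^2=|\nabla h|^2$. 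So the route as you describe it yields $\tfrac{19}{4}s$ instead of $\tfrac{15}{4}s$, which your own model check would reject. Only with the Codazzi defect term do you get $\tfrac34s+4s-s=\tfrac{15}{4}s$. Your first alternative has the same flaw: $T_{lijk;l}=T_{llij;k}+\cdots$ is not a Ricci identity. Justifying it via $T_{lijk}=h_{ijk;l}-\Omega_{lijk}$ produces the same $\omega\,\nabla h$ terms, and the commutator then acts on $h$, not on $T$.

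Once this term is included, your direct computation is a legitimate, more self-contained alternative to the paper's proof. The paper instead quotes the Simons-type formula $\tfrac12\Delta s=|\nabla h|^2+3s-\sum_{i,j}|[H_i,H_j]|^2-q$. Its coefficient $3s$ is exactly your $4s$ from curvature plus the $-s$ from the Codazzi defect. The paper then substitutes \eqref{eqn:2.6} and proves $\sum_{i,j}|[H_i,H_j]|^2=4q-s^2$, using that a three-dimensional algebraic curvature tensor is determined by its Ricci contraction, here $-S$. You would still need that explicit identity, not just the count of quartic invariants. Finally, one model checks only one linear relation among the coefficients. The Berger sphere ($s=\tfrac{25}{8}$, $q=\tfrac{275}{64}$) gives a second, independent check.
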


\begin{proof}
Calculating the Laplacian of $s$, we have (cf. \cite[Lemma~4.1]{HYY2019})
\begin{equation}\label{eqn:2.12}
\tfrac12\Delta s=|\nabla h|^2+3s-\sum_{i,j}|[H_i,H_j]|^2-q.
\end{equation}
To evaluate the commutator term, we put
$$
\mathcal R^h_{ijkl}:=[H_i,H_j]_{kl}=\sum_p(h_{ikp}h_{jlp}-h_{ilp}h_{jkp}).
$$
The tensor $\mathcal R^h$ has the usual curvature symmetries, and
by \eqref{eqn:2.4} its Ricci contraction and scalar curvature are
$$
\mathcal R^h_{ij}:=\sum_k\mathcal R^h_{kikj}=\sum_{k,p}(h_{kkp}h_{ijp}-h_{kjp}h_{ikp})=-S_{ij},
$$
$$
\tau^h:=\sum_i\mathcal R^h_{ii}=-s.
$$

At any fixed $p\in M^3$, we choose an orthonormal basis $\{e_1, e_2, e_3\}$
of $T_pM^3$ such that $S_{ij}=\mu_i\delta_{ij}$. In dimension three an algebraic curvature
tensor is determined by its Ricci tensor. In the present frame its mixed curvature
components vanish, while its three sectional components satisfy
$$
\mathcal R^h_{1212}+\mathcal R^h_{1313}=-\mu_1,\quad
\mathcal R^h_{1212}+\mathcal R^h_{2323}=-\mu_2,\quad
\mathcal R^h_{1313}+\mathcal R^h_{2323}=-\mu_3.
$$
Solving this system gives
\begin{equation}\label{eqn:2.13}
\mathcal R^h_{ijij}=\mu_k-\tfrac12s, \quad \{i,j,k\}=\{1,2,3\}.
\end{equation}
Using $\sum_k\mu_k=s$ and $\sum_k\mu_k^2=q$, we finally obtain
$$
\sum_{i,j}|[H_i,H_j]|^2=\sum_{i,j,k,l}(\mathcal R^h_{ijkl})^2
=\sum_{i,j}(\mathcal R^h_{ijij})^2=4\sum_k\left(\mu_k-\tfrac{s}{2}\right)^2=4q-s^2.
$$

Substituting the above and \eqref{eqn:2.6} into \eqref{eqn:2.12} we obtain \eqref{eqn:2.10}.

Multiplying \eqref{eqn:2.10} by $s$ and then taking integration by parts we get \eqref{eqn:2.11}.
\end{proof}

\begin{lemma}\label{lem:2.4}
For a closed Lagrangian threefold $M^3$ of $\mathbb S^6(1)$, it holds that
\begin{equation}\label{eqn:2.14}
0=\int_M\Big(\sum_{i,j,k}S_{ij,k}S_{ik,j}
-\tfrac14|\nabla s|^2-s^2+3q+\tfrac52sq-3r-\tfrac12s^3\Big)\dd M.
\end{equation}
\end{lemma}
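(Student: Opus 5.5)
The plan is to integrate by parts once, commute the two covariant derivatives, and use the contracted Bianchi identity for the Ricci tensor \eqref{eq:ricci}. The resulting curvature terms are then evaluated algebraically. This works because in dimension three the full curvature tensor is determined by the Ricci tensor, hence by $S$.

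\emph{Step 1 (integration by parts).} On the closed manifold $M^3$ we write
$$\int_M\sum_{i,j,k}S_{ij,k}S_{ik,j}\dd M=-\int_M\sum_{i,j,k}S_{ij}S_{ik,jk}\dd M .$$
The Ricci identity gives
$$S_{ik,jk}=S_{ik,kj}+\sum_m\bigl(S_{mk}R_{mijk}+S_{im}R_{mkjk}\bigr),$$
with the paper's convention $R_{ijkl}=\langle R(e_i,e_j)e_l,e_k\rangle$. The signs must be checked carefully against that convention; the check that the final answer matches \eqref{eqn:2.14} fixes them.

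\emph{Step 2 (divergence term).} By \eqref{eq:ricci}, $R_{ij}=2\delta_{ij}-S_{ij}$. The contracted second Bianchi identity $\sum_k R_{ik,k}=\tfrac12\tau_{,i}$ together with $\tau=6-s$ yields $\sum_k S_{ik,k}=\tfrac12 s_{,i}$. Hence
$$-\int_M\sum_{i,j}S_{ij}\bigl(\tfrac12 s_{,i}\bigr)_{,j}\dd M=\tfrac12\int_M\sum_{i,j}S_{ij,j}s_{,i}\dd M=\tfrac14\int_M|\nabla s|^2\dd M.$$
This produces the $-\tfrac14|\nabla s|^2$ term once everything is moved to one side.

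\emph{Step 3 (curvature term, the main computation).} We need the pointwise value of $\sum S_{ij}\bigl(S_{mk}R_{mijk}+S_{im}R_{mkjk}\bigr)$.
\begin{itemize}
\item Work at a point in a frame with $S_{ij}=\mu_i\delta_{ij}$. By the Gauss equation and \eqref{eqn:2.13}, the curvature of $M$ in this frame has vanishing mixed components.
\item The sectional curvatures are $K_{ij}=1+\mu_k-\tfrac12 s$ for $\{i,j,k\}=\{1,2,3\}$, and the Ricci eigenvalues are $2-\mu_i$.
\item The Ricci-type piece then gives $\sum_i\mu_i^2(2-\mu_i)=2q-r$.
\item The sectional piece gives
$$\sum_{i\ne j}\mu_i\mu_j K_{ij}=\bigl(1-\tfrac s2\bigr)(s^2-q)+\sum_{i\ne j}\mu_i\mu_j\mu_k,$$
where $k$ is the complementary index. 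The last sum equals $6\mu_1\mu_2\mu_3=s^3-3sq+2r$ by Newton's identities.
\item The curvature term equals the Ricci piece minus the sectional piece:
$$2q-r-\Bigl[(s^2-q)-\tfrac12 s^3+\tfrac12 sq+s^3-3sq+2r\Bigr]=-s^2+3q+\tfrac52 sq-3r-\tfrac12 s^3.$$
\end{itemize}

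Combining Steps 1--3 gives exactly \eqref{eqn:2.14}. The only delicate point is the sign bookkeeping in the Ricci commutation formula under the paper's index convention. I would verify it by checking that the coefficients $3q$ and $-3r$ come out as stated. Everything else is routine.
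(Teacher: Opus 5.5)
Your proof is correct and is essentially the paper's argument: integrate by parts in $k$, apply the Ricci identity, use the contracted Bianchi identity to get the $\tfrac14|\nabla s|^2$ term, and evaluate the curvature term in a frame diagonalizing $S$ with $K_{ij}=1+\mu_k-\tfrac12 s$. The paper does the bookkeeping with $R_{ij}$ and writes the cubic term as $\sum_{i<j}K_{ij}(\rho_i-\rho_j)^2$, which equals your $\sum_i\mu_i^2\rho_i-\sum_{i\ne j}\mu_i\mu_jK_{ij}=\sum_{i<j}K_{ij}(\mu_i-\mu_j)^2$. Also, your commutation formula (with $k$ summed) is already the correct one for the convention $R_{ijkl}=\langle R(e_i,e_j)e_l,e_k\rangle$, so you do not need to use the target identity to fix the signs.
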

\begin{proof}
Write $R_{ij}$ for the Ricci tensor, and use commas for its covariant derivatives.
The contracted Bianchi identity $\sum_kR_{ik,k}=\tfrac12\tau_{,i}$ and Ricci
commutation identity give
$$
\begin{aligned}
\sum_kR_{ik,jk}&=\sum_kR_{ik,kj}+\sum_mR_{im}R_{mj}+\sum_{m,k}R_{mk}R_{mijk}\\
&=\tfrac12\tau_{,ij}+\sum_mR_{im}R_{mj}+\sum_{m,k}R_{mk}R_{mijk}.
\end{aligned}
$$
Then, integrating by parts in the $k$ index, we can derive that
\begin{equation}\label{eqn:2.15}
\begin{aligned}
\int_M\sum_{i,j,k}R_{ij,k}R_{ik,j}\dd M
 &=-\int_M\sum_{i,j,k}R_{ij}R_{ik,jk}\dd M\\
 &=-\tfrac12\int_M\sum_{i,j}R_{ij}\tau_{,ij}\dd M\\
 &\quad-\int_M\Big(\tr(\Ric^3)+\sum_{i,j,k,l}R_{ij}R_{kl}R_{kijl}\Big)\dd M\\
 &=\tfrac14\int_M|\nabla\tau|^2\dd M\\
 &\quad-\int_M\Big(\tr(\Ric^3)+\sum_{i,j,k,l}R_{ij}R_{kl}R_{kijl}\Big)\dd M.
\end{aligned}
\end{equation}

Similar to the proof of Lemma \ref{lem:2.3}, we diagonalize $S$ at a point
such that $S_{ij}=\mu_i\delta_{ij}$. Then $R_{ij}=\rho_i\delta_{ij}$ with Ricci
eigenvalues $\rho_i=2-\mu_i$. Write $K_{ij}=R_{ijij}$. Since
$\rho_i=\sum_{j\ne i}K_{ij}$, we have
\begin{equation}\label{eqn:2.16}
\tr(\Ric^3)+\sum_{i,j,k,l}R_{ij}R_{kl}R_{kijl}=\sum_{i<j}K_{ij}(\rho_i-\rho_j)^2.
\end{equation}
Note that, by \eqref{eqn:2.13}, we have
$$
K_{ij}=1+\mathcal R^h_{ijij}=1+\mu_k-\tfrac12s,\quad \{i,j,k\}=\{1,2,3\}.
$$
Then, using
$$
\sum_{i<j}(\mu_i-\mu_j)^2=3q-s^2,
$$
\begin{align*}
\mathop\mathfrak{S}_{ijk}\mu_k(\mu_i-\mu_j)^2
 &=\sum_{i\ne j}\mu_i^2\mu_j-6\mu_1\mu_2\mu_3\\
 &=(sq-r)-(s^3-3sq+2r)\\
 &=4sq-3r-s^3,
\end{align*}
we obtain
\begin{equation}\label{eqn:2.17}
\begin{aligned}
\sum_{i<j}K_{ij}(\mu_i-\mu_j)^2
&=\left(1-\tfrac12s\right)(3q-s^2)
   +\mathop\mathfrak{S}_{ijk}\mu_k(\mu_i-\mu_j)^2\ \ \\\
&=\left(1-\tfrac12s\right)(3q-s^2)+4sq-3r-s^3\\
&=-s^2+3q+\tfrac52sq-3r-\tfrac12s^3,
\end{aligned}
\end{equation}
where $\mathop\mathfrak{S}\limits_{ijk}$ denotes the cyclic summation over $\{i,j,k\}=\{1,2,3\}$.

Finally, substituting \eqref{eqn:2.16} and \eqref{eqn:2.17} into \eqref{eqn:2.15},
together with using $R_{ij,k}=-S_{ij,k}$ and $\nabla\tau=-\nabla s$, we get
\eqref{eqn:2.14}.
\end{proof}

\section{Algebraic estimates and an integral inequality}\label{sect:3}

In this section, as a necessary and crucial continuation of Lemma \ref{lem:2.4},
we first establish Lemmas 3.1\,$\sim$\,3.3 by combining tensor decomposition with
the geometric properties of Lagrangian submanifolds. Based on these lemmas, we can
estimate the sum $\sum_{i,j,k}S_{ij,k}S_{ik,j}$ and obtain \eqref{eqn:3.26}. Then
combining \eqref{eqn:2.14} with Simons' integral equality \eqref{eqn:2.11}, we then
derive the important integral inequality \eqref{eqn:3.24}.

For subsequent applications, we set
\begin{equation}\label{eqn:3.1}
P_{ijk}=\sum_{a,b}h_{abi}T_{abjk},\ \
W_{ij}=\sum_{a,b}\omega_{iab}P_{abj},\ \
A_{ij}=\tfrac12(W_{ij}+W_{ji}).
\end{equation}
As $T_{lijk}$ is totally symmetric and trace-free, we have
\begin{equation}\label{eqn:3.2}
P_{ijk}=P_{ikj},\qquad \sum_jP_{ijj}=0.
\end{equation}

Because $P_{abi}=P_{aib}$ and $\omega_{iab}=-\omega_{bai}$, we have
$$
\sum_iA_{ii}=\sum_{i,a,b}\omega_{iab}P_{abi}=0,
$$
Thus, the matrix $A$ is symmetric and trace-free.

\begin{lemma}\label{lem:3.1}
With the notation above, we have
\begin{equation}\label{eqn:3.3}
S_{ij,k}=P_{ijk}+P_{jik}+D_{ijk},\quad D_{ijk}=\tfrac14\sum_m(\omega_{kim}S_{mj}+\omega_{kjm}S_{im}).
\end{equation}
Moreover, it holds that
\begin{equation}\label{eqn:3.4}
\begin{aligned}
\sum_{i,j,k}S_{ij,k}S_{ik,j}={}&|P|^2+3\sum_{i,j,k}P_{ijk}P_{jik}
 +\tfrac12\langle A,S\rangle+\tfrac1{16}(s^2-3q).
\end{aligned}
\end{equation}
\end{lemma}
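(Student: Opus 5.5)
The plan has two parts. First compute $S_{ij,k}$ from Lemma~\ref{lem:2.1}. Then square the resulting three-term expression and reduce each piece with the identities for $\omega$ and $h$ already available.

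\emph{Step 1: the formula for $S_{ij,k}$.} Since $S_{ij}=\sum_{a,b}h_{abi}h_{abj}$, we have $S_{ij,k}=\sum_{a,b}(h_{abi;k}h_{abj}+h_{abi}h_{abj;k})$. Substitute \eqref{eqn:2.7} with $l=k$.

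The $T$-part gives $\sum_{a,b}T_{kabi}h_{abj}+\sum_{a,b}h_{abi}T_{kabj}$. By total symmetry of $T$ this equals $P_{jik}+P_{ijk}$.

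The $\omega$-part of $h_{abi;k}h_{abj}$ is
$$\tfrac14\sum_{a,b,m}\bigl(\omega_{kam}h_{mbi}+\omega_{kbm}h_{ami}+\omega_{kim}h_{abm}\bigr)h_{abj}.$$
The first two terms are each of the form $\sum\omega_{kam}X_{am}$ with $X_{am}=\sum_b h_{mbi}h_{abj}$. Adding the corresponding two terms coming from $h_{abi}h_{abj;k}$ turns $X$ into $\sum_b(h_{mbi}h_{abj}+h_{abi}h_{mbj})$, which is symmetric in $(a,m)$. Contracting it with the skew $\omega_{kam}$ therefore gives zero.

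The third terms give $\tfrac14\sum_m(\omega_{kim}S_{mj}+\omega_{kjm}S_{im})=D_{ijk}$. This proves \eqref{eqn:3.3}.

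\emph{Step 2: squaring.} Write $Q_{ijk}=P_{ijk}+P_{jik}$. Then
$$\sum S_{ij,k}S_{ik,j}=\sum Q_{ijk}Q_{ikj}+2\sum Q_{ijk}D_{ikj}+\sum D_{ijk}D_{ikj},$$
where the middle term uses the symmetry of the bilinear form $(X,Y)\mapsto\sum X_{ijk}Y_{ikj}$.

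For the $QQ$ term, use $P_{ijk}=P_{ikj}$ and relabel indices to get four terms:
\begin{itemize}
\item $\sum P_{ijk}P_{ikj}=|P|^2$;
\item $\sum P_{ijk}P_{kij}=\sum P_{ijk}P_{jik}$;
\item $\sum P_{jik}P_{ikj}=\sum P_{ijk}P_{jik}$ after renaming;
\item $\sum P_{jik}P_{kij}=\sum P_{ijk}P_{jik}$ after cycling indices.
\end{itemize}
So the $QQ$ term is $|P|^2+3\sum P_{ijk}P_{jik}$.

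For the $DD$ term, expand into four products of $\omega\omega SS$ and contract using \eqref{eqn:2.3}. Each product reduces to a combination of $s^2$ and $q$. I expect the total to be $\tfrac1{16}(s^2-3q)$, and I will check this in the diagonal frame $S=\mathrm{diag}(\mu_i)$, where it becomes an explicit finite sum.

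\emph{Step 3: the cross term.} This is the main obstacle: showing $2\sum Q_{ijk}D_{ikj}=\tfrac12\langle A,S\rangle$. Expanding gives $\tfrac12\sum(P_{ijk}+P_{jik})(\omega_{jim}S_{mk}+\omega_{jkm}S_{im})$.

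\begin{itemize}
\item The $P_{ijk}\omega_{jkm}$ piece vanishes, because $P$ is symmetric in $(j,k)$.
\item The $P_{ijk}\omega_{jim}S_{mk}$ piece equals $\sum_{m,k}W_{mk}S_{mk}$, up to sign and index order in $\omega$.
\item The $P_{jik}\omega_{jim}$ piece vanishes only if $\sum_{i,j}P_{jik}\omega_{jim}$ has the right symmetry. I will use \eqref{eqn:2.9} applied to $T$ (which is also totally symmetric and trace-free, so the same alternating argument holds) to rewrite $\sum\omega P$ contractions in terms of $W$.
\item The remaining $P_{jik}\omega_{jkm}S_{im}$ piece also reduces to $\sum W S$.
\end{itemize}

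Since $S$ is symmetric, $\sum W_{mk}S_{mk}=\langle A,S\rangle$. I will then fix the coefficients and signs, again verifying in the frame diagonalizing $S$. The resulting total should be $\tfrac12\langle A,S\rangle$.
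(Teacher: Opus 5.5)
Your Step 1 and the $QQ$ computation follow the paper's argument exactly. The paper also kills the two $\omega hh$ lines by swapping $a\leftrightarrow m$ and $b\leftrightarrow m$. The $DD$ term is routine. Besides the diagonal-frame check the paper uses, your contraction via \eqref{eqn:2.3} also works: the four products give $(q-s^2)+(s^2-q)+(s^2-q)-2q=s^2-3q$.

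\textbf{The gap is in Step 3.} You correctly identify it as the crux but do not carry it out. Beyond Step 1, the only nontrivial content of \eqref{eqn:3.4} is the coefficient $\tfrac12$ in front of $\langle A,S\rangle$. That coefficient is purely sign bookkeeping, and you leave it open (``up to sign'', ``I will then fix the coefficients and signs'').

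Your third bullet also points the wrong way. The piece $\sum P_{jik}\omega_{jim}S_{mk}$ does not vanish. If it did, the remaining two nonzero pieces $\pm\langle W,S\rangle$ would give a cross term of $0$ or $\pm\langle W,S\rangle$, never $\tfrac12\langle W,S\rangle$. No version of \eqref{eqn:2.9} for $T$ is needed. Working in a frame diagonalizing $S$ does not simplify $P$ or $W$, so it cannot settle the signs either.

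The fix uses only the cyclic and skew symmetry of $\omega$ and $P_{ijk}=P_{ikj}$. For fixed free indices,
\[
\sum_{i,j}\omega_{jim}P_{ijk}=\sum_{a,b}\omega_{mba}P_{abk}=-W_{mk},\qquad
\sum_{i,j}\omega_{jim}P_{jik}=\sum_{a,b}\omega_{mab}P_{abk}=W_{mk},
\]
\[
\sum_{j,k}\omega_{jkm}P_{jik}=\sum_{j,k}\omega_{mjk}P_{jki}=W_{mi},\qquad
\sum_{j,k}\omega_{jkm}P_{ijk}=0.
\]
Hence
\[
4\sum_{i,j,k}(P_{ijk}+P_{jik})D_{ikj}=(-1+1+1+0)\langle W,S\rangle,
\]
where the third term uses $S_{im}=S_{mi}$. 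So the cross term $2\sum_{i,j,k}(P_{ijk}+P_{jik})D_{ikj}$ equals $\tfrac12\langle W,S\rangle=\tfrac12\langle A,S\rangle$, again by the symmetry of $S$. With this filled in, your argument coincides with the paper's proof.
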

\begin{proof}
Since $S_{ij,k}=\sum_{a,b}h_{abi;k}h_{abj}+\sum_{a,b}h_{abi}h_{abj;k}$, applying
\eqref{eqn:2.7} with direct calculations we obtain
\begin{align*}
 S_{ij,k}={}&P_{ijk}+P_{jik}\\
 &+\tfrac14\sum_{a,b,m}\omega_{kam}(h_{mbi}h_{abj}+h_{abi}h_{mbj})\\
 &+\tfrac14\sum_{a,b,m}\omega_{kbm}(h_{ami}h_{abj}+h_{abi}h_{amj})\\
 &+\tfrac14\sum_{a,b,m}(\omega_{kim}h_{abm}h_{abj}+\omega_{kjm}h_{abi}h_{abm}).
\end{align*}
In the above equation, we see that the second line vanishes by interchanging $a,m$;
whereas the third line vanishes by interchanging $b,m$. The last line is
$D_{ijk}$. This verifies the assertion of \eqref{eqn:3.3}.

To prove \eqref{eqn:3.4}, we use \eqref{eqn:3.3} to obtain
\begin{equation}\label{eqn:3.5}
\begin{aligned}
\sum_{i,j,k}S_{ij,k}S_{ik,j}=\,&\sum_{i,j,k}(P_{ijk}+P_{jik}+D_{ijk})(P_{ikj}+P_{kij}+D_{ikj})\\
=\,&\sum_{i,j,k}(P_{ijk}+P_{jik})(P_{ikj}+P_{kij})\\
&+2\sum_{i,j,k}(P_{ijk}+P_{jik})D_{ikj}+\sum_{i,j,k}D_{ijk}D_{ikj}
\end{aligned}
\end{equation}
By using \eqref{eqn:3.2} and relabelling the dummy indices we have
$$
\sum_{i,j,k}(P_{ijk}+P_{jik})(P_{ikj}+P_{kij})=|P|^2+3\sum_{i,j,k}P_{ijk}P_{jik}.
$$
To deal with the mixed term in \eqref{eqn:3.5}, we use \eqref{eqn:3.3} to expand it and obtain
\begin{align*}
4\sum_{i,j,k}(P_{ijk}+P_{jik})D_{ikj}
={}&\sum_{i,j,k,m}P_{ijk}\omega_{jim}S_{mk}+\sum_{i,j,k,m}P_{ijk}\omega_{jkm}S_{im}\\
&+\sum_{i,j,k,m}P_{jik}\omega_{jim}S_{mk}+\sum_{i,j,k,m}P_{jik}\omega_{jkm}S_{im}.
\end{align*}
On the right-hand side, the second sum is zero
because $P_{ijk}=P_{ikj}$, whereas $\omega_{jkm}$ is skew-symmetric in $j,k$.
After relabelling the dummy indices, the first, third and fourth sums are
$-\langle W,S\rangle$, $\langle W,S\rangle$ and $\langle W,S\rangle$,
respectively. Since $S$ is symmetric,
$\langle W,S\rangle=\langle A,S\rangle$. It follows that
$$
2\sum_{i,j,k}(P_{ijk}+P_{jik})D_{ikj}=\tfrac12\langle A,S\rangle.
$$

For the last term in \eqref{eqn:3.5}, we diagonalize $S$ at the
point, say $S_{ij}=\mu_i\delta_{ij}$. Then
$$
D_{ijk}=\tfrac14\omega_{kij}(\mu_j-\mu_i),
$$
and only triples with distinct indices contribute.

Together with the fact $\omega_{123}=1$ we can derive that
$$
\begin{aligned}
\sum_{i,j,k}D_{ijk}D_{ikj}
 &=-\tfrac18\bigl[(\mu_2-\mu_1)(\mu_3-\mu_1)
  +(\mu_3-\mu_2)(\mu_1-\mu_2)\\[-2mm]
 &\hspace{28mm}
  +(\mu_1-\mu_3)(\mu_2-\mu_3)\bigr]\\
 &=-\tfrac1{16}\sum_{i<j}(\mu_i-\mu_j)^2 =\tfrac1{16}(s^2-3q).
\end{aligned}
$$
Combining the preceding three calculations we have proved \eqref{eqn:3.4}.
\end{proof}

\begin{lemma}\label{lem:3.2}
The tensors $P$ and $A$ further satisfy the following inequality:
\begin{equation}\label{eqn:3.6}
|P|^2+3\sum_{i,j,k}P_{ijk}P_{jik}\ge\tfrac9{40}|\nabla s|^2-\tfrac13|A|^2.
\end{equation}
\end{lemma}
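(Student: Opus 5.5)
The plan is to treat the inequality as a purely algebraic statement about a tensor $P$ at one point. I would decompose $P$ into $SO(3)$-irreducible pieces, note that the quadratic form on the left-hand side acts diagonally on those pieces, and then identify the pieces with $\nabla s$ and $A$.

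By \eqref{eqn:3.2}, $P_{ijk}$ lies in $V:=\mathbb R^3\otimes \mathrm{Sym}^2_0(\mathbb R^3)$. This space has dimension $15$. As an $SO(3)$-module it splits as
$$V=V_3\oplus V_2\oplus V_1,\qquad \dim V_3=7,\ \dim V_2=5,\ \dim V_1=3,$$
and each summand occurs with multiplicity one.

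\textbf{Step 1: the three components.}
\begin{itemize}
\item The $V_3$-component is the totally symmetric trace-free part of $P$.
\item The $V_1$-component is determined by the vector $v_k:=\sum_iP_{iik}$.
\item The $V_2$-component is determined by the $\omega$-contraction $W$ of \eqref{eqn:3.1}.
\end{itemize}
The antisymmetric part of $W$ is dual to a vector. Using \eqref{eqn:2.3}, $\sum_{i,j}\omega_{kij}W_{ij}$ reduces to a multiple of $v_k$, since the contraction $\sum_a P_{aak}$ of the trace-free-in-$(j,k)$ tensor gives exactly $v$. So the $V_2$-component is governed by the symmetric trace-free part $A$ of $W$. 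Concretely, I would write
$$P=P^{(3)}+\alpha\big(\delta_{ij}v_k+\delta_{ik}v_j-\tfrac23\delta_{jk}v_i\big)+\beta\sum_m\big(\omega_{ijm}A_{mk}+\omega_{ikm}A_{mj}\big),$$
where $P^{(3)}$ is totally symmetric and trace-free. The constants $\alpha,\beta$ are fixed by re-contracting this ansatz to recover $v$ and $A$ respectively, using \eqref{eqn:2.3} for the $\omega$-terms.

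\textbf{Step 2: diagonalize the quadratic form.} Let $\sigma$ be the transposition of the first two indices. The form $Q(P)=|P|^2+3\langle P,P^\sigma\rangle$ is $SO(3)$-invariant, and the decomposition has multiplicity one. By Schur's lemma, $Q$ therefore equals
$$Q(P)=c_3|P^{(3)}|^2+c_1|P^{(1)}|^2+c_2|P^{(2)}|^2.$$
\begin{itemize}
\item On $V_3$ we have $P^\sigma=P$, so $c_3=4\ge0$, and this term can be dropped.
\item For $c_1$ and $c_2$, I would evaluate $Q$ on the explicit test tensors above. For example, take $v=e_1$, and take $A=\mathrm{diag}(1,-1,0)$ (or a simple off-diagonal $A$).
\item The same test tensors give the norms $|P^{(1)}|^2$ in terms of $|v|^2$ and $|P^{(2)}|^2$ in terms of $|A|^2$.
\end{itemize}
I expect $c_1|P^{(1)}|^2$ to be a positive multiple of $|v|^2$. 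I expect $c_2|P^{(2)}|^2$ to be a negative multiple of $|A|^2$, because on the $\omega$-type component $P^\sigma$ is roughly $-\tfrac12 P$.

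\textbf{Step 3: relate $v$ to $\nabla s$.} By \eqref{eqn:3.3}, $s_{,k}=\sum_iS_{ii,k}=2\sum_iP_{iik}+\sum_iD_{iik}$. The last sum vanishes by the skew-symmetry of $\omega$, so $\nabla s=2v$ and $|\nabla s|^2=4|v|^2$. The claim \eqref{eqn:3.6} then amounts to checking two numbers:
\begin{itemize}
\item the $V_1$ coefficient is at least $\tfrac9{40}\cdot4=\tfrac9{10}$ per unit $|v|^2$;
\item the $V_2$ coefficient is at least $-\tfrac13$ per unit $|A|^2$.
\end{itemize}
Since the bound is stated as sharp, I expect both to hold with equality.

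\textbf{Main obstacle.} The delicate part is the bookkeeping in Steps 1–2: getting the normalizations $\alpha$, $\beta$ right, and computing $\langle P,P^\sigma\rangle$ on the $\omega$-component. That computation requires repeated use of \eqref{eqn:2.3} together with trace-freeness of $A$.

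I would first do everything in a frame diagonalizing $A$, which makes $\omega_{ijm}A_{mk}$ explicit. I would then cross-check the Schur constants by also evaluating $Q$ on a random combination, to confirm that the three components are $Q$-orthogonal. That orthogonality is guaranteed abstractly, but it is a useful consistency check on the formulas.
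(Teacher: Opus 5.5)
Your proposal is correct, but it is organized differently from the paper's proof.

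\textbf{The paper's route.} The paper never decomposes $P$ into irreducible pieces. Instead it uses two scalar contractions.
\begin{enumerate}
\item It forms the totally symmetric tensor $C_{ijk}=P_{ijk}+P_{jki}+P_{kij}$. For this tensor, $|C|^2=3\bigl(|P|^2+2\sum_{i,j,k}P_{ijk}P_{jik}\bigr)$ and $\tr C=\nabla s$, and it applies the weighted Cauchy--Schwarz bound $|\tr C|^2\le\tfrac53|C|^2$.
\item From \eqref{eqn:2.3} it obtains the exact identity $|P|^2-\sum_{i,j,k}P_{ijk}P_{jik}=|W|^2=|A|^2+\tfrac18|\nabla s|^2$.
\end{enumerate}
The lemma then follows by writing the left-hand side as $\tfrac43(\cdot)-\tfrac13(\cdot)$ of these two quantities.

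\textbf{Your route.} I checked the constants in your Schur-lemma argument:
\begin{itemize}
\item $\alpha=\tfrac3{10}$ and $\beta=\tfrac13$;
\item $c_3=4$;
\item $|P^{(1)}|^2=\tfrac35|v|^2$, with $Q(P^{(1)})=\tfrac9{10}|v|^2$;
\item $|P^{(2)}|^2=\tfrac23|A|^2$, with $Q(P^{(2)})=-\tfrac13|A|^2$.
\end{itemize}
So your argument yields the exact identity
\[
|P|^2+3\sum_{i,j,k}P_{ijk}P_{jik}=4|P^{(3)}|^2+\tfrac9{40}|\nabla s|^2-\tfrac13|A|^2 ,
\]
and the two coefficient checks in your Step 3 hold with equality, as you anticipated.

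\textbf{What each approach buys.} Yours gives transparency. The remainder is explicit, the constants $\tfrac9{40}$ and $-\tfrac13$ are visibly optimal, and equality holds exactly when the totally symmetric trace-free part of $P$ vanishes. In fact, the slack the paper discards in its Cauchy--Schwarz step, $\tfrac49\bigl(|C|^2-\tfrac35|\tr C|^2\bigr)$, is precisely this $4|P^{(3)}|^2$. The paper's route buys economy: it needs no representation theory and no normalization of an explicit decomposition, only two contractions and one elementary inequality.

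\textbf{For the write-up.} State explicitly why Schur's lemma applies over $\mathbb R$: the real irreducible $SO(3)$-modules are absolutely irreducible, and $\omega$ is $SO(3)$-invariant because it is the volume form. Also state that $v$ and $A$, being equivariant images of $P$, vanish on every summand other than $V_1$ and $V_2$ respectively.
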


\begin{proof}
Since $S$ is symmetric and $\omega$ is skew-symmetric, we get
$$
D_{ijk}=D_{jik},\quad D_{ijk}+D_{ikj}+D_{jki}=0,\quad
\sum_iD_{iik}=\tfrac12\sum_{i,m}\omega_{kim}S_{mi}=0.
$$
Thus by \eqref{eqn:3.3} we have
\begin{equation}\label{eq:P-trace}
\sum_iP_{iik}=\tfrac12\sum_iS_{ii,k}=\tfrac12s_{,k}.
\end{equation}

Set
$$
C_{ijk}=P_{ijk}+P_{jki}+P_{kij}.
$$
Since $P_{ijk}=P_{ikj}$, the tensor $C$ is totally symmetric.
Moreover, by using \eqref{eqn:3.2} and \eqref{eq:P-trace}, we have
$$
(\tr C)_k=\sum_iC_{iik}=2\sum_iP_{iik}+\sum_iP_{kii}=s_{,k},\ \ \forall\,k.
$$
Thus $\tr C=\nabla s$.

We use the following trace estimate for symmetric cubic tensors in
dimension three. Fix $k$ and denote the other two indices by $r,t$.
By the weighted Cauchy--Schwarz inequality we can get
$$
\begin{aligned}
 \big(\sum_iC_{iik}\big)^2
 &=\left(C_{kkk}+C_{rrk}+C_{ttk}\right)^2\\
 &\leq\tfrac53\left(
 C_{kkk}^2+3C_{rrk}^2+3C_{ttk}^2\right)\\
 &=\tfrac53\Big(C_{kkk}^2+3\sum_{i\ne k}C_{iik}^2\Big),\ \ \forall\,k.
\end{aligned}
$$
Since $C$ is totally symmetric, we see that
$$
\sum_kC_{kkk}^2+3\sum_{i\ne k}C_{iik}^2=|C|^2-6C_{123}^2.
$$
Summing over $k$, we then obtain
$$
|\tr C|^2=\sum_k\big(\sum_iC_{iik}\big)^2\leq\tfrac53\left(|C|^2-6C_{123}^2\right)\leq\tfrac53|C|^2.
$$
On the other hand, we have
$$
\begin{aligned}
 |C|^2&=\sum_{i,j,k}(P_{ijk}+P_{jki}+P_{kij})^2=3\Big(|P|^2+2\sum_{i,j,k}P_{ijk}P_{jik}\Big),
\end{aligned}
$$
where the symmetry $P_{ijk}=P_{ikj}$ and a relabelling of the indices
have been used in the last equality. Consequently, by using $\tr C=\nabla s$ we get
\begin{equation}\label{eq:PS}
\begin{aligned}
|P|^2+2\sum_{i,j,k}P_{ijk}P_{jik}\geq\tfrac15|\nabla s|^2.
\end{aligned}
\end{equation}

Using \eqref{eqn:2.3}, we obtain
\begin{equation}\label{eqn:3.9}
\begin{aligned}
|W|^2&=\sum_{j,a,b,c,d}(\sum_i\omega_{iab}\omega_{icd})P_{abj}P_{cdj}\\
&=\sum_{a,b,c,d,j}(\delta_{ac}\delta_{bd}-\delta_{ad}\delta_{bc})P_{abj}P_{cdj}\\
&=|P|^2-\sum_{i,j,k}P_{ijk}P_{jik}.
\end{aligned}
\end{equation}

Similarly, by using \eqref{eqn:2.3}, \eqref{eqn:3.2} and \eqref{eq:P-trace}, we have
$$
\begin{aligned}
\sum_{i,j}\omega_{kij}W_{ij}
&=\sum_{j,a,b}(\delta_{ja}\delta_{kb}-\delta_{jb}\delta_{ka})P_{abj}\\
&=\sum_jP_{jkj}-\sum_jP_{kjj}\\
&=\sum_jP_{jjk}=\tfrac12s_{,k},\ \ \forall\,k.
\end{aligned}
$$

Moreover,
$$
\begin{aligned}
\sum_k\Big(\sum_{i,j}\omega_{kij}W_{ij}\Big)^2
&=\sum_{i,j,a,b}(\delta_{ia}\delta_{jb}-\delta_{ib}\delta_{ja})W_{ij}W_{ab}\\
&=|W|^2-\sum_{i,j}W_{ij}W_{ji}.
\end{aligned}
$$
Thus, by the definition of $A$ and \eqref{eqn:3.9},
$$
\begin{aligned}
|A|^2 &=\tfrac14\sum_{i,j}(W_{ij}+W_{ji})^2\\
&=|W|^2-\tfrac12\sum_k\Big(\sum_{i,j}\omega_{kij}W_{ij}\Big)^2\\
&=|P|^2-\sum_{i,j,k}P_{ijk}P_{jik}-\tfrac18|\nabla s|^2.
\end{aligned}
$$
This combining with \eqref{eq:PS} gives
$$
\begin{aligned}
 |P|^2+3\sum_{i,j,k}P_{ijk}P_{jik}
 ={}&\tfrac43\Big(
 |P|^2+2\sum_{i,j,k}P_{ijk}P_{jik}\Big)\\
 &-\tfrac13\Big(
 |P|^2-\sum_{i,j,k}P_{ijk}P_{jik}\Big)\\
 \geq{}&\tfrac9{40}|\nabla s|^2-\tfrac13|A|^2.
\end{aligned}
$$
We have completed the proof of Lemma \ref{lem:3.2}.
\end{proof}

It remains to estimate the terms $|A|$ and $\langle A,S\rangle$.
These estimates are given in the
following lemma.
\begin{lemma}\label{lem:3.3}
With the notation above, we have
\begin{align}
|A|^2&\leq\tfrac5{12}s|T|^2,\label{eqn:3.10}\\[2mm]
\langle A,S\rangle^2
&\leq\tfrac1{12}(-4s^3+17sq-15r)|T|^2.\label{eqn:3.11}
\end{align}
\end{lemma}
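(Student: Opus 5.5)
Both inequalities say that the linear map $T\mapsto A$ (for fixed $h$) has controlled norm: the first as a map into symmetric matrices, the second after pairing with $S$. So I will treat each as a Cauchy--Schwarz estimate for a linear functional, or family of functionals, of $T$ on the space $\mathcal T$ of totally symmetric trace-free $4$-tensors on $\mathbb R^3$. This space is the irreducible $SO(3)$-module of spin $4$ and has dimension $9$. Explicitly,
$$A_{ij}=\tfrac12\sum_{a,b,c,d}\bigl(\omega_{iab}h_{cda}T_{cdbj}+\omega_{jab}h_{cda}T_{cdbi}\bigr)=\langle T,B^{(ij)}\rangle,$$
where $B^{(ij)}$ is the orthogonal projection onto $\mathcal T$ of the tensor $(c,d,b,l)\mapsto\tfrac12\sum_a(\omega_{iab}h_{cda}\delta_{lj}+\omega_{jab}h_{cda}\delta_{li})$. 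Then $|A|^2\le \big(\sum_{i,j}|B^{(ij)}|^2\big)|T|^2$ is a first attempt for \eqref{eqn:3.10}, but it is too lossy. The better bound is $|A|^2\le\|\mathcal L_h\|^2|T|^2$, where $\mathcal L_h:\mathcal T\to\mathrm{Sym}_0(3)$ is the map $T\mapsto A$ and $\|\cdot\|$ is its operator norm. This requires computing $\mathcal L_h\mathcal L_h^*$, a symmetric endomorphism of the $5$-dimensional space $\mathrm{Sym}_0(3)$ that is quadratic in $h$.

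For \eqref{eqn:3.11} the route is cleaner. We have $\langle A,S\rangle=\langle W,S\rangle=\langle T,\Phi\rangle$, where $\Phi$ is the projection onto $\mathcal T$ of
$$\Phi'_{cdbj}=\sum_{i,a}\omega_{iab}h_{cda}S_{ij}.$$
Hence $\langle A,S\rangle^2\le|\pi_{\mathcal T}\Phi'|^2|T|^2$, and the task is to show $|\pi_{\mathcal T}\Phi'|^2=\tfrac1{12}(-4s^3+17sq-15r)$. I would compute this projection by symmetrizing $\Phi'$ over its four indices and then subtracting traces. In dimension $3$, for a symmetric $4$-tensor $U$ with trace $u_{ab}=\sum_c U_{abcc}$ and double trace $\bar u=\tr u$, the formula is
$$|\pi U|^2=|U|^2-\tfrac67|u|^2+\tfrac3{35}\bar u^2.$$
The resulting expressions are polynomial invariants of $h$. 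I would evaluate them in a frame diagonalizing $S$ and reduce them to $s,q,r$ using the identities available for trace-free symmetric cubics in dimension $3$: the $7$-dimensional space of such cubics, together with relations like the one in the proof of Lemma \ref{lem:2.3}, namely $\sum_{i,j}|[H_i,H_j]|^2=4q-s^2$. Identity \eqref{eqn:2.9} and \eqref{eqn:2.3} should absorb the $\omega$'s. The coefficients $17$ and $-15$ will come out of this bookkeeping. A sanity check: the bound must vanish when $S$ is a multiple of the identity, since then $\langle A,S\rangle=0$. Indeed $-4s^3+17s\cdot s^2/3-15s^3/9=0$, which confirms the target.

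For \eqref{eqn:3.10} I would do the same with the full operator. Set $M(h)=\mathcal L_h\mathcal L_h^*$ on $\mathrm{Sym}_0(3)$. By equivariance, $M(h)$ is built from $S$, and possibly from $h$ contracted with itself, so its largest eigenvalue should be bounded by $\tfrac5{12}s$. I expect this to be the main obstacle: the operator norm is not a polynomial invariant, so something beyond an exact identity is needed. My first attempt would be to show that $\tr M(h)$ is a fixed multiple of $s$ and that $M(h)\le\tfrac5{12}s\,\mathrm{Id}$, writing $M(h)=\alpha s\,\mathrm{Id}+\beta\,(S\text{-dependent part})$ and bounding the $S$-part using $0\le S$ and $\tr S=s$. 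If that fails to be sharp enough, I would decompose $h$ into $SO(3)$-irreducible pieces (cubic harmonics) and use the maximal eigenvalue over the unit sphere of cubics. The value $\tfrac5{12}$ suggests that the extremal case is the Berger-sphere cubic $h$, which has a $U(1)$ symmetry. That is exactly when equality in Theorem \ref{thm:1.2} must hold, so I would verify the constant there first.
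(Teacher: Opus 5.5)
Your framework is the mechanism of the paper's proof: write $\langle A,B\rangle=\langle T,\Phi'_B\rangle$ (your $\Phi'$ with $S$ replaced by $B$), apply Cauchy--Schwarz in $T$, and identify the constant as an invariant of $h$. Bounding $\|\mathcal L_h\|$ is equivalent to the paper's device of bounding $\langle A,B\rangle^2$ for every $B$ and then setting $B=A$. The gap is that for \eqref{eqn:3.10} you stop where the proof begins. You never obtain the quadratic form $B\mapsto|\mathcal L_h^*B|^2$ on $\mathrm{Sym}_0(3)$, or any bound for it, and the routes you sketch would not produce one.

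Your first ansatz, $\alpha s\,\mathrm{Id}+\beta(S\text{-part})$, is too small. Equivariant forms of this type span a $3$-dimensional space, generated by $s|B|^2$, $\tr(B^2S)$ and $|v_B|^2=\sum_i\langle B,H_i\rangle^2$, where $v_B=\sum_{a,b}B_{ab}h_{abi}$. The last one is not a function of $S$, and it does occur in $M(h)$. Knowing $\tr M(h)$ does not help either: it equals $\tfrac{15}{14}s$. Nor is there an extremal cubic on which to calibrate $\tfrac5{12}$. For $h\neq0$ one has $\tr(B^2S)=\sum_{i,a,b}(\sum_jB_{ij}h_{abj})^2>0$ for every nonzero trace-free $B$, so \eqref{eqn:3.10} is strict unless $T=0$; on the Berger sphere $T=0$ anyway.

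The missing idea is the paper's Claim 3.1. Diagonalize $B$ and write $\langle A,B\rangle$ as in \eqref{eqn:3.12}, in terms of the twelve components of $T$ other than $T_{iiii}$. This uses only the symmetry of $T$, with no trace-free projection. Cauchy--Schwarz weighted by the multiplicities $4,4,6,12$ gives $\langle A,B\rangle^2\le Q(B)|T|^2$ with $Q(B)=|\mathrm{Sym}\,\Phi'_B|^2$. A direct computation then shows
\[
Q(B)=\tfrac5{12}s|B|^2-\tfrac14\tr(B^2S)-\tfrac12|v_B|^2\le\tfrac5{12}s|B|^2,
\]
and you finish by taking $B=A$. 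Your sharper constant $|\pi_{\mathcal T}\Phi'_B|^2\le Q(B)$ then comes for free.

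For \eqref{eqn:3.11} your target identity is correct, and in fact the projection does nothing there. For $B=S$ the trace of $\mathrm{Sym}\,\Phi'$ is $\tfrac16(Y+Y^{T})$, with $Y_{db}=\sum_{i,a,c}\omega_{iab}h_{cda}S_{ic}$. This is an $SO(3)$-equivariant cubic map from trace-free cubics to symmetric $2$-tensors. Since $\mathrm{Sym}^3$ of the spin-$3$ module has no spin-$0$ or spin-$2$ summand, the map vanishes, and $|\pi_{\mathcal T}\Phi'|^2=Q(S-\tfrac13sg)$.

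However, the ``bookkeeping'' you defer is the entire content. By Claim 3.1, $12Q(S-\tfrac13sg)=7sq-2s^3-3r-6|v|^2$ with $v_i=\sum_{a,b}S_{ab}h_{abi}$. So you need the sextic identity $|v|^2=\tfrac13s^3-\tfrac53sq+2r$. It does not follow from the quartic relations you cite, such as $\sum|[H_i,H_j]|^2=4q-s^2$. A frame diagonalizing $S$ also gives you no normal form for $h$ in which to check it. The paper instead puts $e_1$ at a minimum of $h(x,x,x)$ on the unit sphere. This yields the normal form \eqref{eqn:3.17} with $h_{112}=h_{113}=h_{123}=0$, in which $s,q,r,|v|^2$ are computed explicitly in terms of $a,b,u,w$.
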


\begin{proof}
First, let $B$ be symmetric and trace-free. Choose an oriented orthonormal
frame with $B=\operatorname{diag}(b_1,b_2,b_3)$, and put
$$
d_1=b_3-b_2,\qquad d_2=b_1-b_3,\qquad d_3=b_2-b_1.
$$

Let all cyclic sums below run over $(i,j,k)=(1,2,3),(2,3,1),(3,1,2)$.
The symmetry of $T$ gives
$$
\begin{aligned}
\langle A,B\rangle\,
&=\sum_ib_iA_{ii}=\sum_ib_iW_{ii}=\sum_{i,a,b,m,n}b_i\omega_{iab}h_{amn}T_{mnbi}\\
&=\mathop\mathfrak{S}_{ijk}\sum_{m,n}b_i\big(h_{jmn}T_{mnki}-h_{kmn}T_{mnji}\big)\\
&=\mathop\mathfrak{S}_{ijk}b_i(X_j-X_k)=\mathop\mathfrak{S}_{ijk}d_iX_i,
\end{aligned}
$$
where we set
$$
\begin{aligned}
X_i:&=\sum_{m,n}h_{imn}T_{mnjk}\\
&=h_{iii}T_{iijk}+h_{ijj}T_{jjjk}+h_{ikk}T_{jkkk}\\
&\ \ \ +2h_{iij}T_{ijjk}+2h_{iik}T_{ijkk}+2h_{ijk}T_{jjkk}.
\end{aligned}
$$
It follows that to give an expression by components of $T$ we obtain
\begin{equation}\label{eqn:3.12}
\begin{aligned}
\langle A,B\rangle &=\mathop\mathfrak{S}_{ijk}d_i\Big\{h_{iii}T_{iijk}+h_{ijj}T_{jjjk}+h_{ikk}T_{jkkk}\\
&\hspace{20mm}+2h_{iij}T_{ijjk}+2h_{iik}T_{ijkk}+2h_{ijk}T_{jjkk}\Big\}\\
&=\mathop\mathfrak{S}_{ijk}\Big\{ d_i h_{ijj}T_{jjjk}+d_i h_{ikk}T_{jkkk}+2d_i h_{123}T_{jjkk}\\
 &\hspace{20mm}+(d_i h_{iii}+2d_j h_{ijj}+2d_k h_{ikk})T_{iijk}\Big\}.
\end{aligned}
\end{equation}
By the symmetry of $T$, we have
$$
|T|^2=\sum_{i,j,k,l}T_{ijkl}^2=\sum_iT_{iiii}^2+\mathop\mathfrak{S}_{ijk}\bigl(4T_{jjjk}^2+4T_{jkkk}^2
          +6T_{jjkk}^2+12T_{iijk}^2\bigr).
$$
Thus, from \eqref{eqn:3.12}, using the weighted Cauchy-Schwarz inequality
$$
\Big(\sum_\alpha c_\alpha t_\alpha\Big)^2\le\Big(\sum_\alpha\tfrac{c_\alpha^2}{m_\alpha}\Big)\Big(\sum_\alpha m_\alpha t_\alpha^2\Big),\ \ \text{for}\ m_\alpha>0,
$$
and setting
$$
\begin{aligned}
Q(B):=\mathop\mathfrak{S}_{ijk}\Big\{
 &\tfrac14d_i^2(h_{ijj}^2+h_{ikk}^2)
 +\tfrac23d_i^2h_{123}^2\\
 &+\tfrac1{12}
 (d_i h_{iii}+2d_j h_{ijj}+2d_k h_{ikk})^2\Big\},
\end{aligned}
$$
we obtain
\begin{equation}\label{eqn:3.13}
\begin{aligned}
\langle A,B\rangle^2
&\leq Q(B)\mathop\mathfrak{S}_{ijk}\bigl\{4T_{jjjk}^2+4T_{jkkk}^2
          +6T_{jjkk}^2+12T_{iijk}^2\bigr\}\\
&=Q(B)\Big(|T|^2-\sum_iT_{iiii}^2\Big)
\leq Q(B)|T|^2.
\end{aligned}
\end{equation}

To simplify $Q(B)$ we first make the following claim.

\vskip2mm\noindent
{\bf Claim 3.1.} Put $v_i=\sum_{a,b}B_{ab}h_{abi}$. Then
\begin{equation}\label{eqn:3.14}
Q(B)=\tfrac5{12}s|B|^2-\tfrac14\operatorname{tr}(B^2S)-\tfrac12|v|^2.
\end{equation}
\vskip2mm\noindent
{\it Proof of Claim 3.1}.
Put $B_{ij}=b_i\delta_{ij}$, so that $v_i=\sum_a b_a h_{aai}$. For each cyclic triple $(i,j,k)$,
write
$$
x_i=h_{iii},\qquad y_i=h_{ijj}-h_{ikk}.
$$
Trace-freeness gives
$$
h_{ijj}=\tfrac12(-x_i+y_i),\qquad h_{ikk}=\tfrac12(-x_i-y_i).
$$
By using $b_i+b_j+b_k=0$ and $d_i=b_k-b_j$, we can get
$$
3b_i^2+d_i^2=2|B|^2,\qquad \sum_i d_i^2=3|B|^2.
$$
Moreover, it holds that
$$
v_i=\tfrac12(3b_i x_i-d_i y_i),
$$
$$
d_i h_{iii}+2d_j h_{ijj}+2d_k h_{ikk}=2d_i x_i+3b_i y_i.
$$
By definitions, we have the following relation:
$$
\begin{aligned}
s&=\tfrac12\sum_i(5x_i^2+3y_i^2)+6h_{123}^2,\\
\operatorname{tr}(B^2S)&=\tfrac12\sum_i\bigl[
(|B|^2+2b_i^2)x_i^2+|B|^2y_i^2-2b_i d_i x_i y_i\bigr]+2|B|^2h_{123}^2,\\
6|v|^2&=\tfrac32\sum_i(3b_i x_i-d_i y_i)^2,\\
12Q(B)&=\sum_i\left[\tfrac32d_i^2(x_i^2+y_i^2)+(2d_i x_i+3b_i y_i)^2\right]+24|B|^2h_{123}^2.
\end{aligned}
$$
Calculating $12Q(B)+3\operatorname{tr}(B^2S)+6|v|^2$, we easily see that
the coefficient of each $x_i y_i$ vanishes:
$$
12b_i d_i-3b_i d_i-9b_i d_i=0;
$$
whereas the coefficients of $x_i^2$ and $y_i^2$ can be simplified, respectively, to be:
$$
\tfrac{11}{2}d_i^2+\tfrac{33}{2}b_i^2+\tfrac32|B|^2=\tfrac{25}{2}|B|^2,
$$
$$
3d_i^2+9b_i^2+\tfrac32|B|^2=\tfrac{15}{2}|B|^2.
$$
Hence we obtain
$$
\begin{aligned}
12Q(B)+3\operatorname{tr}(B^2S)+6|v|^2
&=\tfrac52|B|^2\sum_i(5x_i^2+3y_i^2)
  +30|B|^2h_{123}^2\\
&=5s|B|^2.
\end{aligned}
$$
This verifies the assertion of Claim 3.1.\qed

\begin{remark}\label{rm:3.1}
{\rm $Q(B)$ in \eqref{eqn:3.14} is invariant in an arbitrary orthonormal
frame and the preceding Cauchy--Schwarz estimate \eqref{eqn:3.13} holds for every
symmetric trace-free $B$.}
\end{remark}

Following \eqref{eqn:3.14} and the fact
$$
\operatorname{tr}(B^2S)=\sum_{i,a,b}\Big(\sum_j B_{ij}h_{abj}\Big)^2\geq0,
$$
we can see that
\begin{equation}\label{eqn:3.15}
0\leq Q(B)\leq\tfrac5{12}s|B|^2.
\end{equation}
Then, taking $B=A$ we derive that
$$
|A|^4=\langle A,A\rangle^2\leq Q(A)|T|^2\leq\tfrac5{12}s|A|^2|T|^2,
$$
This immediately implies the inequality \eqref{eqn:3.10}.

\vskip2mm
To prove \eqref{eqn:3.11}, as for $h=0$ the assertion is
immediate, we shall assume $h\not=0$ and choose $B=S-\tfrac13sg$.
Since $h$ is trace-free, we have
$$
v_i=\sum_{a,b}B_{ab}h_{abi}=\sum_{a,b}S_{ab}h_{abi}.
$$
$$
|B|^2=q-\tfrac13s^2,\quad \operatorname{tr}(B^2S)=r-\tfrac23sq+\tfrac19s^3.
$$
The formula \eqref{eqn:3.14} for $Q(B)$ therefore becomes
\begin{equation}\label{eqn:3.16}
12Q(B)=7sq-2s^3-3r-6|v|^2.
\end{equation}

Now, we shall compute the above expression directly in a frame adapted to $h$.

We choose $e_1$ at a minimum of $x\mapsto h(x,x,x)$ on the unit sphere. Since the
cubic is odd and nonzero, $h_{111}=-a$ with $a>0$. For $\alpha=2,3$, the directional
derivative along $\cos z\,e_1+\sin z\,e_\alpha$ is $3h_{11\alpha}$ at $z=0$.
Hence $h_{112}=h_{113}=0$. Diagonalize $h(e_1,\cdot,\cdot)$ on $e_1^\perp$, ordering
the eigenvalues so that $h_{122}\geq h_{133}$ and retaining the orientation.
Thus $h_{123}=0$, and trace-freeness gives
\begin{equation}\label{eqn:3.17}
\begin{aligned}
h_{111}&=-a,& h_{122}&=\tfrac12(a+b),&
h_{133}&=\tfrac12(a-b),\\
h_{222}&=u,& h_{223}&=w,& h_{233}&=-u,& h_{333}&=-w,
\end{aligned}
\end{equation}
where $a>0$ and $b\geq0$. The other components follow by symmetry or vanish.
Put $t=u^2+w^2$. Then by $S_{ij}=\sum_{a,b}h_{abi}h_{abj}$ we have
\begin{equation}\label{eqn:3.18}
S=\begin{pmatrix}
(3a^2+b^2)/2&bu&bw\\
bu&(a+b)^2/2+2t&0\\
bw&0&(a-b)^2/2+2t
\end{pmatrix},
\end{equation}
and
$$
v_1=-a(a^2-b^2-2t),\quad v_2=b(3a+b)u,\quad v_3=b(3a-b)w.
$$
It follows that
$$
\begin{aligned}
|v|^2&=a^2(a^2-b^2-2t)^2+b^2(3a+b)^2u^2+b^2(3a-b)^2w^2\\
&=a^2(a^2-b^2)^2+(-4a^4+13a^2b^2+b^4)t+4a^2t^2+6ab^3(u^2-w^2).
\end{aligned}
$$
By \eqref{eqn:3.18} and the definition $s=\tr S$, $q=\tr(S^2)$,
$r=\tr(S^3)$ we can derive that
\begin{equation}\label{eqn:3.19}
\begin{aligned}
s&=\tfrac12(5a^2+3b^2)+4t,\\[1mm]
q&=\tfrac14(11a^4+18a^2b^2+3b^4)+(4a^2+6b^2)t+8t^2,\\[1mm]
r&=\tfrac18(29a^6+57a^4b^2+39a^2b^4+3b^6)\\[1mm]
 &\quad+(3a^4+24a^2b^2+6b^4)t+(12a^2+18b^2)t^2+16t^3+3ab^3(u^2-w^2).
\end{aligned}
\end{equation}
Substituting these expressions into \eqref{eqn:3.16} yields
$$
\begin{aligned}
12Q(B)
&=12t(a^2-2t)^2\\
&\quad+3b^2\bigl[
14a^4+2a^2b^2+(2b^2-19a^2)t
+18t^2-15ab(u^2-w^2)
\bigr]\\
&=-4s^3+17sq-15r.
\end{aligned}
$$
Therefore, we have proved
\begin{equation}\label{eqn:3.20}
0\leq Q(S-\tfrac13sg)=\tfrac1{12}(-4s^3+17sq-15r).
\end{equation}

As $\operatorname{tr}A=0$, for $B=S-\tfrac13sg$ we finally obtain
$$
\langle A,S\rangle^2=\langle A,B\rangle^2
\leq Q(B)|T|^2=\tfrac1{12}(-4s^3+17sq-15r)|T|^2,
$$
which proves \eqref{eqn:3.11}.
\end{proof}

For $s>0$, we put
\begin{equation}\label{eqn:3.23}
\begin{aligned}
F(s,q,r)=-\tfrac{11}{36}s^3+\tfrac{55}{36}sq-3r+\tfrac5{24}s^2+\tfrac{25}{24}q+\tfrac{25r}{16s},
\end{aligned}
\end{equation}
and we set $F=0$ at $s=0$. It is easily seen that $F$ is continuous: the eigenvalues of
$S$ lie in $[0,s]$, so $0\leq q\leq s^2$ and $0\le\tfrac{r}s\leq s^2$.
Then, by combining Lemmas 3.1\,$\sim$\,3.3 with \eqref{eqn:2.11} and
\eqref{eqn:2.14}, we can obtain the following key proposition.

\begin{proposition}\label{prop:3.1}
Every closed Lagrangian submanifold $M^3$ in $\mathbb S^6(1)$ satisfies
\begin{equation}\label{eqn:3.24}
0\geq\int_M\Big(\tfrac{13}{180}|\nabla s|^2+\tfrac1{180}s|T|^2+F(s,q,r)\Big)\dd M.
\end{equation}
If $M$ is connected and $F\geq0$, then $T=0$.
\end{proposition}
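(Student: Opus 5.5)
The plan is to combine the Ricci-derivative identity \eqref{eqn:2.14} of Lemma \ref{lem:2.4} with the Simons-type identity \eqref{eqn:2.11} of Lemma \ref{lem:2.3}. Lemmas \ref{lem:3.1}--\ref{lem:3.3} supply a pointwise lower bound for $\sum_{i,j,k}S_{ij,k}S_{ik,j}$, and a suitable multiple of \eqref{eqn:2.11} then absorbs the negative $s|T|^2$ contributions.

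\textbf{Step 1: pointwise lower bound.} By \eqref{eqn:3.4} and \eqref{eqn:3.6},
$$
\sum_{i,j,k}S_{ij,k}S_{ik,j}\ \ge\ \tfrac9{40}|\nabla s|^2-\tfrac13|A|^2+\tfrac12\langle A,S\rangle+\tfrac1{16}(s^2-3q).
$$
I bound $|A|^2\le\tfrac5{12}s|T|^2$ by \eqref{eqn:3.10}. For the mixed term, at points with $s>0$ I use Young's inequality with a weight $\varepsilon>0$ to be fixed:
$$
\tfrac12|\langle A,S\rangle|\le\varepsilon s|T|^2+\tfrac1{16\varepsilon s}\,\frac{\langle A,S\rangle^2}{|T|^2}
\le \varepsilon s|T|^2+\tfrac1{192\varepsilon s}(-4s^3+17sq-15r).
$$
The second inequality uses \eqref{eqn:3.11}; when $T=0$ we have $P=0$ and hence $A=0$, so no division is actually needed. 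At points with $s=0$ we have $h=0$, hence $A=0$ and $F=0$ by convention, so the bound persists there. This division by $s$ is what produces the term $\tfrac{25r}{16s}$ in $F$.

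\textbf{Step 2: integrate and add a multiple of \eqref{eqn:2.11}.} Insert the bound from Step 1 into \eqref{eqn:2.14}. This gives
$$
0\ge\int_M\Big(-\tfrac1{40}|\nabla s|^2-\big(\tfrac5{36}+\varepsilon\big)s|T|^2+\text{(polynomial in }s,q,r,r/s)\Big)\dd M .
$$
Next add $\lambda$ times the identity \eqref{eqn:2.11}, which is $0=\int_M\big(s|T|^2+\tfrac12|\nabla s|^2+\dots\big)\dd M$. I choose $\lambda$ and $\varepsilon$ so that the coefficients match the statement:
\begin{itemize}
\item the $|\nabla s|^2$ coefficient $-\tfrac1{40}+\tfrac\lambda2=\tfrac{13}{180}$ forces $\lambda=\tfrac7{36}$;
\item the $s|T|^2$ coefficient $\tfrac7{36}-\tfrac5{36}-\varepsilon=\tfrac1{180}$ forces $\varepsilon=\tfrac1{20}$.
\end{itemize}
With these values the Young term becomes $\tfrac5{48s}(4s^3-17sq+15r)=\tfrac5{12}s^2-\tfrac{85}{48}q+\tfrac{25r}{16s}$.

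\textbf{Step 3: check the polynomial.} The routine but error-prone step is to collect the remaining zeroth-order terms. They come from four sources: $\tfrac1{16}(s^2-3q)$, the term $-s^2+3q+\tfrac52sq-3r-\tfrac12s^3$ of \eqref{eqn:2.14}, $\tfrac7{36}\big(\tfrac{15}4s^2+s^3-5sq\big)$ from \eqref{eqn:2.11}, and the Young term above. I expect this bookkeeping to be the main obstacle, and I will verify that the sum is exactly $F(s,q,r)$. A spot check: the $s^3$ coefficient is $-\tfrac12+\tfrac7{36}=-\tfrac{11}{36}$, which agrees with $F$. This establishes \eqref{eqn:3.24}.

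\textbf{Step 4: rigidity.} Assume $M$ is connected and $F\ge0$. Every term of the integrand in \eqref{eqn:3.24} is then nonnegative, so each vanishes identically. Hence $\nabla s=0$ and $s|T|^2=0$. By connectedness $s$ is constant. If $s>0$, then $T=0$ everywhere. If $s=0$, then $h\equiv0$, so $\nabla h=0$, and the definition of $T$ gives $T=0$.
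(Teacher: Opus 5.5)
Your proposal is correct and follows the paper's proof essentially step for step. The paper likewise adds $\tfrac7{36}$ times \eqref{eqn:2.11} to \eqref{eqn:2.14}, bounds $\sum_{i,j,k}S_{ij,k}S_{ik,j}$ via \eqref{eqn:3.4}, \eqref{eqn:3.6} and \eqref{eqn:3.10}, applies Young's inequality to $\tfrac12\langle A,S\rangle$ with your weight $\varepsilon=\tfrac1{20}$, and then runs the same rigidity argument; the bookkeeping you deferred in Step 3 also closes, giving $s^2$, $q$, $sq$, $r$ and $r/s$ coefficients $\tfrac5{24}$, $\tfrac{25}{24}$, $\tfrac{55}{36}$, $-3$ and $\tfrac{25}{16}$, exactly as in $F$.
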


\begin{proof}
Adding $\tfrac7{36}$ times \eqref{eqn:2.11} to \eqref{eqn:2.14}, we obtain
\begin{equation}\label{eqn:3.25}
\begin{aligned}
0=\int_M\Big[&
\tfrac7{36}s|T|^2+\sum_{i,j,k}S_{ij,k}S_{ik,j}-\tfrac{11}{72}|\nabla s|^2\\
&-\tfrac{11}{36}s^3+\tfrac{55}{36}sq-3r-\tfrac{13}{48}s^2+3q
 \Big]\dd M.
\end{aligned}
\end{equation}
By \eqref{eqn:3.4}, \eqref{eqn:3.6} and \eqref{eqn:3.10},
we get
$$
\sum_{i,j,k}S_{ij,k}S_{ik,j}\geq\tfrac9{40}|\nabla s|^2-\tfrac5{36}s|T|^2
+\tfrac12\langle A,S\rangle+\tfrac1{16}(s^2-3q).
$$
On $\{s>0\}$, \eqref{eqn:3.11} gives
$$
\begin{aligned}
\tfrac12\langle A,S\rangle
&\geq-\tfrac12\left(\tfrac{-4s^3+17sq-15r}{12}\right)^{1/2}|T|\\
&\geq-\tfrac{1}{20}s|T|^2-\tfrac5{48s}(-4s^3+17sq-15r).
\end{aligned}
$$
Thus we have
\begin{equation}\label{eqn:3.26}
\begin{aligned}
\tfrac7{36}s|T|^2+\sum_{i,j,k}S_{ij,k}S_{ik,j}
\geq{}&\tfrac1{180}s|T|^2+\tfrac9{40}|\nabla s|^2\\
&+\tfrac1{48}\left(23s^2-94q+75\tfrac{r}{s}\right).
\end{aligned}
\end{equation}

At a zero of $h$, both $\nabla S$ and $\nabla s$ vanish.
Together with $0\leq r/s\leq s^2$, this shows that the inequality
extends continuously across $\{s=0\}$.

Substituting \eqref{eqn:3.26} into \eqref{eqn:3.25} we immediately
prove \eqref{eqn:3.24}.

If $F\geq0$, the integrand in \eqref{eqn:3.24} is nonnegative
and continuous, so $\nabla s=0$ and $s|T|^2=0$ everywhere.
Since $M$ is connected, $s$ is constant.
If $s>0$, then $T=0$; if $s=0$, then $h=0$ and again $T=0$.
\end{proof}

\section{Proofs of the pinching theorems}\label{sect:4}

To prove the two pinching theorems, we shall verify that $F\geq0$ under the pinching 
condition of $|h|^2$ and $\Ric$, respectively. Then Proposition~\ref{prop:3.1} gives $T=0$, and 
Lemma~\ref{lem:2.2} determines the submanifold.

To prove Theorem \ref{thm:1.2}, we first prove two useful inequalities
involving $s,q,r$ as follows.

\begin{lemma}\label{lem:3.4}
Every symmetric trace-free cubic on a Euclidean three-space satisfies
\begin{align}
s^3+3sq-10r&\geq0,\label{eqn:3.21}\\
 -4s^3+17sq-15r&\geq0.\label{eqn:3.22}
\end{align}
\end{lemma}

\begin{proof}
The second inequality is given by \eqref{eqn:3.20}.

For the first inequality, the case $h=0$ is immediate. Otherwise, use
the frame before such that \eqref{eqn:3.17} holds, where $a>0$, $b\geq0$,
and $t=u^2+w^2$. Substituting \eqref{eqn:3.19} gives
$$
\begin{aligned}
 \tfrac13(s^3+3sq-10r)
 ={}&b^2(a^2-b^2)^2+(36a^4-11a^2b^2+b^4)t\\
 &+36a^2t^2-10ab^3(u^2-w^2).
\end{aligned}
$$
Since $u^2-w^2=t-2w^2$, the above expression can be rewritten to be
$$
\begin{aligned}
 \tfrac13(s^3+3sq-10r)={}&[6at-b(b^2-a^2)]^2+(2a-b)^2(3a+b)^2t+20ab^3w^2,
\end{aligned}
$$
which is obvious nonnegative.
\end{proof}

\noindent
{\bf Completion of Theorem~\ref{thm:1.2}'s proof}.

For $s>0$, the function in \eqref{eqn:3.23} has the factorization
$$
\begin{aligned}
F(s,q,r)={}&\tfrac{25-8s}{240s}(2s^3+10sq+15r)\\
&+\tfrac7{60}(s^3+3sq-10r)+\tfrac4{45}(-4s^3+17sq-15r).
\end{aligned}
$$

If $0<s\leq25/8$, the first term in this decomposition is nonnegative,
since $q,r\geq0$. The other two are nonnegative by \eqref{eqn:3.21} and
\eqref{eqn:3.22}. With the continuous definition $F=0$ at $s=0$, 
Proposition~\ref{prop:3.1} gives $T=0$. Lemma~\ref{lem:2.2} gives 
$K\ge\tfrac1{16}$, so the global classification of \cite[Main Theorem]{DVV} 
applies. The constant sectional curvature $\tfrac1{16}$ sphere is excluded 
because it satisfies $s=\tfrac{45}8>\tfrac{25}8$. Thus $M$ is totally geodesic 
or congruent to the Berger sphere of Dillen-Verstraelen-Vrancken, on which 
$s=\tfrac{25}8$.
\qed

\vskip2mm
To prove Theorem \ref{thm:1.1}, we first prove the following lemma.

\begin{lemma}\label{lem:4.1}
Diagonalizing $S$ at the point, say $S_{ij}=\mu_i\delta_{ij}$,
and write $s=\sum_i\mu_i$, $q=\sum_i\mu_i^2$, $r=\sum_i\mu_i^3$.
If $0\leq\mu_i\leq\tfrac{15}8$ and $s^3+3sq-10r\geq0$, then $F(s,q,r)\geq0$.
\end{lemma}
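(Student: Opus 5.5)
The plan is to regard $F$, for fixed $s>0$, as an affine function of $(q,r)$, bound $r$ from above, and reduce to a one-variable check. The reason this should work is that in \eqref{eqn:3.23} the coefficient of $q$, namely $\tfrac{55}{36}s+\tfrac{25}{24}$, is positive. The coefficient of $r$, namely $-3+\tfrac{25}{16s}$, is negative as soon as $s>\tfrac{25}{48}$. So lower bounds on $q$ and upper bounds on $r$ are what we need. The case $s=0$ is trivial since $F=0$ there. On $\{0<s\le\tfrac{25}{48}\}$ we have $r\ge0$ with a nonnegative coefficient, and $q\ge s^2/3$. It then remains to check $-\tfrac{11}{36}s^3+\tfrac{55}{108}s^3+\tfrac5{24}s^2+\tfrac{25}{72}s^2\ge0$, which is clear for small $s$.

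For $s>\tfrac{25}{48}$ I have two upper bounds on $r$ available:
\[
10r\le s^3+3sq \quad(\text{hypothesis}),\qquad r=\sum_i\mu_i^3\le \tfrac{15}8\sum_i\mu_i^2=\tfrac{15}8q \quad(\text{from }0\le\mu_i\le\tfrac{15}8).
\]
I will use the convex combination $r\le \theta\,\tfrac{s^3+3sq}{10}+(1-\theta)\tfrac{15}8q$ with a weight $\theta=\theta(s)\in[0,1]$ still to be chosen. Substituting this into $F$ gives a lower bound $F\ge \alpha(s)+\beta(s)q$, with explicit rational functions $\alpha,\beta$ of $s$. The choice of $\theta$ is guided by the equality case. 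At $\mu_1=\mu_2=\mu_3=m$ a direct computation gives $F=m^2(\tfrac{105}{16}-\tfrac72m)$, which vanishes exactly at $m=\tfrac{15}8$, the constant curvature $\tfrac1{16}$ model with $s=\tfrac{45}8$. At that point $r=\tfrac{15}8q$ holds with equality, while $s^3+3sq-10r>0$. Hence $\theta$ must tend to $0$ as $s\to\tfrac{45}8$, and near the top of the range the bound $r\le\tfrac{15}8q$ does the work. For moderate $s$, say $s\le\tfrac{25}8$, the $E_1:=s^3+3sq-10r$ bound should be used, with $\theta=1$ or close to it. I will first try a piecewise choice: $\theta=1$ for $s\le s_0$ and $\theta=0$ for $s\ge s_0$, with $s_0$ to be tuned. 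Only if that fails will I let $\theta$ be linear in $s$.

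Once $F\ge\alpha(s)+\beta(s)q$ is established, I minimize over the admissible range of $q$ for the given $s$. The constraints $0\le\mu_i\le\tfrac{15}8$ with $\sum\mu_i=s$ give $q\ge s^2/3$. They also give an upper bound $q\le q_{\max}(s)$, attained by putting as many $\mu_i$ as possible equal to $\tfrac{15}8$ and the rest at $0$ or at the leftover value. Since the bound is affine in $q$, it suffices to check the two endpoints $q=s^2/3$ (relevant when $\beta\ge0$) and $q=q_{\max}(s)$ (relevant when $\beta<0$). Each is a one-variable polynomial inequality on an interval of $s$, which I will verify by factoring. In particular, at $q=s^2/3$ with $\theta=0$ I expect the factor $(45-8s)$ to appear, because of the equality case.

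The main obstacle is making these endpoint inequalities hold simultaneously with a single choice of $\theta(s)$. Replacing $r$ by $\tfrac{15}8q$ is lossy when the $\mu_i$ are spread out. Replacing $r$ by $\tfrac{s^3+3sq}{10}$ is lossy near equal eigenvalues. The delicate region is intermediate $s\in[\tfrac{25}8,\tfrac{45}8]$ with $q$ near $q_{\max}$. If the affine reduction is too crude there, my fallback is to work directly with the symmetric cubic/quartic $sF$ on the box $[0,\tfrac{15}8]^3$. I would add a Lagrange multiplier $\lambda(s)\,sE_1$ and show that $sF-\lambda sE_1\ge0$ by checking interior critical points (where two $\mu_i$ coincide, by symmetry) and faces of the box (where some $\mu_i\in\{0,\tfrac{15}8\}$). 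Each of these is again a problem in one or two variables.
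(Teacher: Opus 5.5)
\textbf{The main route fails.} For every $s$ with $\frac{1875}{856}<s<\frac{45}{8}$ (roughly $2.19<s<5.63$), the lower bound $\alpha(s)+\beta(s)q$ is negative at an admissible point, whatever $\theta(s)\in[0,1]$ you choose.

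Take $\mu_1=\mu_2=\mu_3=s/3$. This point satisfies $\mu_i\le\frac{15}{8}$, has $s^3+3sq-10r=\frac89 s^3>0$, and realizes the endpoint $q=s^2/3$ that you must check. There $r=s^3/9$ exactly, so your two bounds lose
\[
\tfrac{15}{8}q-r=\tfrac{s^2}{9}\bigl(\tfrac{45}{8}-s\bigr),\qquad \tfrac{s^3+3sq}{10}-r=\tfrac{4}{45}s^3,
\]
while $F=\frac{7}{54}s^2(\frac{45}{8}-s)$. Since the coefficient of $r$ in $F$ is $-(3-\frac{25}{16s})$, your bound at this point equals
\[
\tfrac{7}{54}s^2\bigl(\tfrac{45}{8}-s\bigr)-\bigl(3-\tfrac{25}{16s}\bigr)\Bigl[(1-\theta)\tfrac{s^2}{9}\bigl(\tfrac{45}{8}-s\bigr)+\theta\,\tfrac{4}{45}s^3\Bigr].
\]
For $\theta=0$ this is $s^2(\frac{45}{8}-s)(\frac{25}{144s}-\frac{11}{54})$, and for $\theta=1$ it is $s^2(\frac{125}{144}-\frac{107}{270}s)$. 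Both are negative on that interval, hence so is every convex combination. At $s=\frac{25}{8}$, every $\theta$ gives exactly $-\frac{10}{27}s^2$.

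So letting $\theta\to 0$ near the top of the range does not help. The bound $r\le\frac{15}{8}q$ is tight at $(\frac{15}{8},\frac{15}{8},\frac{15}{8})$, but along the diagonal its loss is first order in $\frac{45}{8}-s$, with a larger coefficient than $F$ itself has. The difficulty therefore sits at $q$ near $q_{\min}$, not $q_{\max}$.

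Your equality analysis also misses the second zero of $F$: the Berger configuration $(\frac{15}{8},\frac58,\frac58)$ at $s=\frac{25}{8}$. There $s^3+3sq-10r=0$ but $r<\frac{15}{8}q$, which would force $\theta(\frac{25}{8})=1$. The structural reason for the failure is that any upper bound for $r$ affine in $q$ decouples $r$ from $q$. Near equal eigenvalues, $r$ is pinned down by $(s,q)$, and that is exactly where $F$ is small.

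\textbf{The fallback is where the proof would have to live, and it is not carried out.} The reduction itself is sound. At fixed $s$, $F-\lambda E_1$ with $E_1=s^3+3sq-10r$ has the form $A+Bq+Cr$. An interior critical point on the slice therefore satisfies $2B\mu_i+3C\mu_i^2=\nu$, so it has at most two distinct $\mu_i$; this, not symmetry alone, is the justification. But you must actually exhibit $\lambda(s)\ge0$ with $F-\lambda(s)E_1\ge0$ on the whole slice of the box, and its existence is not automatic:
\begin{itemize}
\item $\lambda$ must be positive for some $s$, since at $(\frac{15}{8},0,0)$ one has $F=-\frac{25}{48}s^2<0$ and $E_1<0$;
\item $\lambda$ must vanish at $s=\frac{45}{8}$;
\item $\lambda$ is further constrained by the first-order conditions at the Berger point.
\end{itemize}

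\textbf{How the paper does it.} The paper avoids all of this by homogeneity. The quantity $c^{-2}F(cs,c^2q,c^3r)$ has $c$-derivative $-\frac1{36}(11s^3-55sq+108r)<0$, and the condition $E_1\ge0$ is scale invariant. So one may rescale until the largest eigenvalue equals $\frac{15}{8}$. Then, with $t=(\mu_2+\mu_3)/\mu_1$ and $v=\mu_2\mu_3/\mu_1^2$, $F$ is a positive multiple of a polynomial $\mathcal P(t,v)$ affine in $v$. It suffices to check the endpoints of the admissible $v$-interval, with $E_1\ge0$ supplying $t\ge\frac23$ and the lower endpoint for $t\le1$. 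This keeps $q$ and $r$ coupled through the eigenvalues, which is precisely what your affine reduction discards.
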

\begin{proof}
The case $s=0$ is immediate. Suppose that $s>0$. Then by symmetry we may 
assume w.l.g that $\mu_1\geq\mu_2\geq\mu_3\geq0$. For $\lambda>0$, we have
$$
\begin{aligned}
\tfrac{F(\lambda s,\lambda^2q,\lambda^3r)}{\lambda^2}
={}&-\tfrac{\lambda}{36}(11s^3-55sq+108r)\\
&+\tfrac5{24}s^2+\tfrac{25}{24}q+\tfrac{25r}{16s}.
\end{aligned}
$$
Differentiating gives
$$
\begin{aligned}
\tfrac{\mathrm d}{\mathrm d\lambda}\left(\tfrac{F(\lambda s,\lambda^2q,\lambda^3r)}{\lambda^2}\right)
={}&-\tfrac{1}{36}(11s^3-55sq+108r)<0.
\end{aligned}
$$
Here, $11s^3-55sq+108r$ is strictly positive since $s>0$ and
$$
\begin{aligned}
11s^3-55sq+108r
&=\sum_i\mu_i(108\mu_i^2-55s\mu_i+11s^2),\\
108\mu_i^2-55s\mu_i+11s^2
&=108\left(\mu_i-\tfrac{55s}{216}\right)^2
  +\tfrac{1727}{432}s^2>0.
\end{aligned}
$$
Thus, we have
$$
F(s,q,r)\geq\lambda^{-2}F(\lambda s,\lambda^2q,\lambda^3r),\ \ \text{for}\ \lambda\ge1.
$$
It therefore suffices to prove the assertion $F(s,q,r)\ge0$ for $\mu_1=\tfrac{15}8$, 
if necessary by taking $\lambda=\tfrac{15}{8\mu_1}\geq1$. Let us write
$$
t=\tfrac{\mu_2+\mu_3}{\mu_1},\qquad
v=\tfrac{\mu_2\mu_3}{\mu_1^2}.
$$
Then
$$
0\leq t\leq2,\qquad 0\leq v\leq\tfrac{t^2}{4},\qquad v\geq t-1,
$$
where the last inequality follows from $(1-\mu_2/\mu_1)(1-\mu_3/\mu_1)\geq0$.
We have
\begin{equation}\label{eqn:4.3}
s=\tfrac{15}8(1+t),\quad
q=\left(\tfrac{15}8\right)^2(1+t^2-2v),\quad
r=\left(\tfrac{15}8\right)^3(1+t^3-3tv).
\end{equation}
Thus $s^3+3sq-10r\geq0$ is equivalent to
\begin{equation}\label{eqn:4.4}
(4t-1)v\geq(1+t)(t-1)^2.
\end{equation}
If $t\leq\tfrac14$, in \eqref{eqn:4.4} the left-hand side is nonpositive and 
the right-hand side is positive, a contradiction. Thus we have $t>\tfrac14$ 
and $v\leq t^2/4$. From \eqref{eqn:4.4} we obtain
$$
0\leq(4t-1)\tfrac{t^2}{4}-(1+t)(t-1)^2=\tfrac{(3t-2)(t+2)}4,
$$
so we get $t\geq\tfrac23$.

Substituting \eqref{eqn:4.3} into $F(s,q,r)$ yields
$$
F(s,q,r)=\tfrac{375}{1024(1+t)}\mathcal P(t,v),
$$
where
$$
\mathcal P(t,v)
=-32t^4+6t^3+38t^2-5t-5+(107t^2-13t-75)v.
$$
For fixed $t$, this polynomial is affine in $v$, so to show 
$\mathcal P(t,v)\ge0$ it suffices to check the two endpoints 
of an interval containing all admissible values of $v$. We consider
two cases:

$\bullet$\ If $\tfrac23\leq t\leq1$, then
$$
\tfrac{(1+t)(t-1)^2}{4t-1}\leq v\leq\tfrac{t^2}{4}.
$$
At the endpoints we have
$$
\begin{aligned}
\mathcal P\left(t,\tfrac{t^2}{4}\right)
&=\tfrac{(2-t)(3t-2)(7t^2+15t+5)}4,\\
\mathcal P\left(t,\tfrac{(1+t)(t-1)^2}{4t-1}\right)
&=\tfrac{(1+t)(3t-2)(35-6t-19t^2-7t^3)}{4t-1}.
\end{aligned}
$$
Obviously, both of the above two expressions are nonnegative for $\tfrac23\leq t\leq1$.

$\bullet$\ If $1\leq t\leq2$, then $t-1\leq v\leq t^2/4$.
The upper endpoint value is nonnegative by the factorization above,
and the lower endpoint value is
$$
\mathcal P(t,t-1)=(2-t)(32t^3-49t^2-16t+35)\geq0,
$$
since
$$
32t^3-49t^2-16t+35
=32(t-1)^3+47\left(t-\tfrac{56}{47}\right)^2+\tfrac{13}{47}>0.
$$

In conclusion, we have completed the proof of Lemma \ref{lem:4.1}.
\end{proof}

\noindent
{\bf Completion of Theorem~\ref{thm:1.1}'s proof}.

By \eqref{eq:ricci}, the assumption $\Ric\geq\tfrac18$ and the positive 
semidefiniteness of $S$ imply that all eigenvalues of $S$ lie in $[0,\tfrac{15}8]$.
Moreover, \eqref{eqn:3.21} gives $s^3+3sq-10r\geq0$. Lemma~\ref{lem:4.1} 
gives $F\geq0$, so Proposition~\ref{prop:3.1} yields $T=0$. By Lemma 
\ref{lem:2.2}, all sectional curvatures satisfy $K\ge\tfrac1{16}$. Since 
$M$ is closed, it is complete, and the global classification of
\cite[Main Theorem]{DVV} applies. Thus $M$ is either totally geodesic, or 
has constant sectional curvature $\tfrac1{16}$, or is congruent
to the Berger sphere of Dillen-Verstraelen-Vrancken.
\qed

\vskip 2mm
\noindent{\bf Acknowledgments}.
Z. Hu was supported by the National NSF of China (Grant No. 12171437).
L. Sun acknowledges support from the National Natural Science Foundation of China
(Grant Nos.~12571055 and 12671062), the Natural Science Foundation of Hunan Province
(Grant No.~2026JJ20014), the 111 Project (Grant No.~D23017), and the Program for
Science and Technology Innovative Research Team in Higher Educational Institutions
of Hunan Province, China.
J. Yin was supported by the National NSF of China (Grant No. 12201138) and NSF of Henan Province (Grant No. 262300421869).

\vskip2mm
\noindent{\bf AI assistance statement.} The authors used AI models to assist 
with proving Lemma 3.3; the main ideas and all final checks remain the sole
responsibility of the human authors.

\vskip2mm
\noindent{\bf Competing Interests}
The authors declare that there are no conflicts of interest.


\end{document}